\numberwithin{equation}{section}
\numberwithin{figure}{section}
\newtheorem{theorem}{Theorem}[section]
\newtheorem{corollary}[theorem]{Corollary}
\newtheorem{proposition}[theorem]{Proposition}
\theoremstyle{definition}
\newtheorem{definition}[theorem]{Definition}
\newtheorem{remark}[theorem]{Remark}
\newcommand*{\Z}{\ensuremath{\mathbb{Z}}}
\newcommand*{\R}{\ensuremath{\mathbb{R}}}
\newcommand{\eps}{\varepsilon}
\newcommand{\ep}{\eps}
\DeclareSymbolFont{boldoperators}{OT1}{cmr}{bx}{n}
\newcommand{\T}{\mathbb{T}}
\newcommand{\dif}{\mathrm{d}}
\newcommand{\Law}{\mathrm{Law}}
\def\XXint#1#2#3{{\setbox0=\hbox{$#1{#2#3}{\int}$}
\vcenter{\hbox{$#2#3$}}\kern-.5\wd0}}
\let\originalleft\left
\let\originalright\right
\renewcommand{\left}{\mathopen{}\mathclose\bgroup\originalleft}
\renewcommand{\right}{\aftergroup\egroup\originalright}
\newcommand{\indc}{\mathds{1}}
\newcommand{\E}{\mathbb{E}}
\renewcommand{\hat}{\widehat}
\renewcommand{\phi}{\varphi}
    \edef\sign{\pgfmathresult}%
    \edef\x{\pgfmathresult}%
    \edef\t{\pgfmathresult}%
    \edef\y{\pgfmathresult}%
\newcommand{\addperiod}[1]{#1.}
\titleformat*{\subsection}{\bfseries}
\titleformat{\subsubsection}[runin]
  {\normalfont\bfseries}
  {\thesubsubsection.}
  {0.5em}
  {\addperiod}
\titleformat*{\subsubsection}{\normalfont\itshape}
\titleformat*{\paragraph}{\bfseries}
\titleformat*{\subparagraph}{\large\bfseries}
\title{Fourier mass lower bounds for Batchelor-regime passive scalars}
\author{William Cooperman\thanks{ETH Z\"urich.
{\footnotesize \href{mailto:bill@cprmn.org}{bill@cprmn.org}.}
}
\and
Keefer Rowan\thanks{Courant Institute of Mathematical Sciences,  New York University.
{\footnotesize \href{mailto:keefer.rowan@cims.nyu.edu}{keefer.rowan@cims.nyu.edu}.}
}
}
\date{\today}
\begin{document}

\maketitle

\begin{abstract}
    Batchelor predicted that a passive scalar $\psi^\nu$ with diffusivity $\nu$, advected by a smooth fluid velocity, should typically have Fourier mass distributed as $|\hat \psi^\nu|^2(k) \approx |k|^{-d}$ for $|k| \ll \nu^{-1/2}$. For a broad class of velocity fields, we give a quantitative lower bound for a version of this prediction summed over constant width annuli in Fourier space. This improves on previously known results, which require the prediction to be summed over the whole ball.
\end{abstract}


\section{Introduction}

In this paper, we study the Fourier mass distribution of solutions $\psi^\nu_t$ to the stochastically forced advection-diffusion equation
\begin{equation}
  \label{eq:randomly-forced-transport}
  \begin{cases}
    \partial_t \psi^\nu_t + u_t \cdot \nabla \psi^\nu_t - \nu \Delta \psi^\nu_t = g \dif W_t \qquad \text{for } t \geq 0,\\
    \psi^\nu_0 = 0,
    \end{cases}
\end{equation}
where $u_t$ is a random, exponentially mixing, divergence-free, sufficiently regular velocity field (e.g.\ a solution to a stochastically forced fluid equation), $W_t$ is a standard Brownian motion independent of $u_t$, and $g \in L^2$. This setting is sometimes referred to as \textit{Batchelor-regime passive scalar turbulence}.

In 1959, Batchelor~\cite{batchelor_small-scale_1959} predicted that, if $u$ is sufficiently regular and $k \in \Z^d$ is a frequency below the dissipative scale $|k| \ll \nu^{-1/2}$, then the Fourier mass at frequency $k$ is approximately
\begin{equation}
  \label{eq:batchelor-original-prediction}
  \E|\hat{\psi}^\nu_t(k)|^2 \approx |k|^{-d}
\end{equation}
for $t > 0$ large.

Although there have been many experimental works~\cite{GibsonC.H.1963Tues, GrantH.L.1968Tsot, Nye_Brodkey_1967, Dillon80, PhysRevLett.85.3636,PhysRevLett.93.214504} studying the conditions under which Batchelor's prediction holds, the first mathematically rigorous result in this direction was given recently by Bedrossian, Blumenthal, and Punshon-Smith~\cite{bedrossian_batchelor_2021}, who proved that a \textit{cumulative} version of Batchelor's prediction holds under some general assumptions on the mixing properties and regularity of the velocity $u_t$. More precisely, they proved the cumulative estimate
\begin{equation}
  \label{eq:cumulative-prediction}
  \lim_{t \to\infty}\E \sum_{|k| \leq r} |\hat{\psi}^\nu_t(k)|^2 \approx \log r
\end{equation}
for each $0 \ll r \ll \nu^{-1/2}$, which is consistent with Batchelor's pointwise prediction~\eqref{eq:batchelor-original-prediction}.

On the other hand, recent work of Blumenthal and Huynh~\cite{blumenthal_sparsity_2024} shows that, without an additional assumption relating to isotropy of the velocity, it is at least necessary to sum Batchelor's prediction over all frequencies $k$ of a given modulus. Indeed,~\cite{blumenthal_sparsity_2024} produces a discrete-time system with mass approximately localized to a cone in Fourier space, causing the lower bound of~\eqref{eq:batchelor-original-prediction} to fail dramatically.

In the present paper, we prove a version of Batchelor's prediction which only sums over frequencies in a constant-width annulus. In Theorem~\ref{thm:general}, we prove quantitatively that for any small $\theta > 0$, there are large constants $h, K > 0$ such that for all $\nu$ sufficiently small, the bound
\begin{equation}
  \label{eq:our-bound}
  K^{-1}hr^{-1} \leq \lim_{t \to\infty}\E \sum_{r \leq |k| \leq r+h} |\hat{\psi}^\nu_t(k)|^2 \leq Khr^{-1}
\end{equation}
holds for at least $(1-\theta)$-fraction of radii $r \in [1, \nu^{-1/2}]$, measured with respect to the normalized logarithmic density~\eqref{eq:log-density-measure-def}.

When compared to the cumulative bound~\eqref{eq:cumulative-prediction}, the constant-width annulus bound~\eqref{eq:our-bound} has two main benefits. First, the constant-width bound identifies (up to a sub-algebraic factor in $\nu^{-1}$) the Batchelor scale $\nu^{-1/2}$; that is, the same bound on a smaller range $[0, \nu^{-1/2 + \varepsilon}]$ of radii would be strictly weaker, since the smaller range misses a constant fraction of radii (measured by logarithmic density). Second, the constant-width bound shows that the Fourier mass is not distributed in sparse chunks. Indeed, a distribution of mass $2^n$ at wavenumbers $(2^{2^n}, 0)$, and no mass elsewhere, would be consistent with the cumulative bound~\eqref{eq:cumulative-prediction} but violate~\eqref{eq:our-bound}. Finally, we can recover the cumulative lower bound of~\eqref{eq:cumulative-prediction} by integrating the constant-width bound~\eqref{eq:our-bound} over any positive fraction of radii.

We also prove, in~\eqref{eq:general-as} and Theorem~\ref{thm:compact-fourier-support}, that stronger conclusions are possible under stronger regularity or stochastic integrability assumptions on $u_t$. In particular, under the strongest assumption that $u_t$ is deterministically bounded with compact Fourier support (as in Pierrehumbert's alternating shear example), we prove that the lower bound in~\eqref{eq:our-bound} holds for every $r \in [0, \nu^{-1/2}]$.

All of our conclusions follow from a novel \textit{flux inequality}, stated as Theorem~\ref{thm:flux}.

\subsection{Setting and assumptions}

For each $t \geq 0$, let $u_t \colon \T^d \to \R^d$ be a random divergence-free velocity with $u \in L^1_{\mathrm{loc},t}W^{1,\infty}_x$ almost surely. Assume that the $u_t$ process is stationary in time, i.e. $\Law(t \mapsto u_t) = \Law(t \mapsto u_{t+s})$ for all $s \geq 0$. Let $\nu > 0$ and $g \in L^2(\T^d)$ with $\int g\,\dif x =0$ and $\|g\|_{L^2} = 1$. Define
\[N_0 := \inf \{r \geq 1 : \|\Pi_{\geq r} g\|_{L^2}^2 \leq \tfrac{1}{4}\},\]
where $\Pi_{\geq r}$ is the Fourier projector defined in Definition~\ref{def:fourier-projector}.

Our main assumption is that $u_t$ exponentially mixes the initial data $g$ uniformly in $\nu$. That is, for $\phi^\nu_t$ defined to be the (random) solution to the equation
\begin{equation}
    \label{eq:phi-equation}
    \begin{cases}
    \partial_t \phi^\nu_t + u_t \cdot \nabla \phi^\nu_t - \nu \Delta \phi^\nu_t =0 \qquad \text{for } t \geq 0,\\
    \phi^\nu_0 = g,
    \end{cases}
\end{equation}
we suppose there exist $K \geq 1, \gamma>0$, such that for all $\nu,t>0$,
\begin{equation}
    \label{eq:u-mixes}
    \E \|\phi_t^\nu\|_{H^{-1}}^2 \leq K e^{-\gamma t}.
\end{equation}
Throughout, we denote by $C>0$ an arbitrary constant depending on $K,\gamma,N_0,d$.

\subsection{Main results}

In this paper, we focus primarily on proving a version of the lower bound of~\eqref{eq:our-bound}. In particular, we show that the radii $r$ such that $\lim_{t \to\infty} \sum_{r \leq |k| \leq r+h} \E  |\hat \psi^\nu_t(k)|^2  \ll \frac{h}{r}$ are rare. By rare, we mean the set of $r$ where the ratio of these terms is small has a small density. Due to the dynamics of the advective term in of~\eqref{eq:randomly-forced-transport}, which typically allow Fourier mass to move from wavenumbers of magnitude $|k|$ to wavenumbers of magnitude $2|k|$ in unit time, the natural density to control is not the uniform density on $r \in [1,\infty)$, but rather the logarithmic density, defined in~\eqref{eq:log-density-measure-def}. Our first and most general result shows that if the velocity $u_0$ is sufficiently regular, then the density of radii that violate the lower bound of~\eqref{eq:our-bound} tends to $0$ quantitatively as $h,K \to \infty$, uniformly in $\nu$.

Define the ``bad'' set of undercharged radii, at width $h>0$ and lower bound $\alpha>0$, by
\begin{equation}
    \label{eq:bad-radii-def}
B^\nu_{h,\alpha} := \Big\{r \in [1,\infty) :\lim_{t \to\infty} \sum_{r \leq |k| \leq r+h} \E  |\hat \psi^\nu_t(k)|^2\leq \frac{\alpha h}{r}\Big\},
\end{equation}
and define, for each $0< a < b < \infty$, the probability measure $\mu_{a,b}$ on $[a,b]$, given for $E \subseteq \R$ by
\begin{equation}
    \label{eq:log-density-measure-def}
    \mu_{a,b}(E) := \frac{1}{\log(b/a)}\int_{E \cap [a,b]} \frac{1}{r}\,\dif r.
\end{equation}

Our first result states that, if $h$ is large and $\alpha$ is small, then $B^{\nu}_{h,\alpha}$ has small $\mu_{N_0,R}$-measure for $R$ up to the Batchelor scale $\nu^{-1/2}$.

\begin{theorem}
\label{thm:general}
    For each $h>0$ and
    \begin{equation}
      \label{eq:valid-R-range}
      C\lor h \leq R \leq\frac{\nu^{-1/2}}{C (\log \nu^{-1})^{1/2}},
    \end{equation}
    for each $p,q\geq 2$, the logarithmic density of bad annuli is bounded by
        \begin{equation}
            \label{eq:general-general}
        \mu_{N_0,R}(B^\nu_{h,\alpha}) \leq C^q h^{2 - q} \alpha^{-1} + C^{p+q} \E \|u_0\|_{C^{q+(d+1)/2}}^p \alpha^{p-1}.
        \end{equation}
    In particular, if we have the a.s.\ bound
    \[\|u_0\|_{C^{q+(d+1)/2}} \leq D,\]
    then taking $p \to \infty$ shows
    \begin{equation}
        \label{eq:general-as}
    \mu_{N_0,R}(B^\nu_{h, C^{-1} D^{-1}}) \leq C^q h^{2-q} D.
    \end{equation}
\end{theorem}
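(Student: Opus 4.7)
The overall strategy is to apply Theorem~\ref{thm:flux} pointwise in \(r\) and then use a Markov-type argument at two different moments (level \(1\) and level \(p\)) to control the measure of the bad set \(B^\nu_{h,\alpha}\). The underlying picture is that the stationary forcing injects \(L^2\) mass at unit rate, this mass must traverse Fourier space before being dissipated near the Batchelor scale \(\nu^{-1/2}\), and the annular mass \(F^\nu_r := \lim_{t\to\infty} \sum_{r \leq |k| \leq r+h} \E |\hat\psi^\nu_t(k)|^2\) is precisely what supports this traversal; an annulus with too little mass can only arise if the velocity \(u\) has ``jumped'' mass across a wide band of wavenumbers, which a sufficiently smooth \(u\) cannot do often.

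Concretely, I expect Theorem~\ref{thm:flux} to produce, for every \(r\) in the valid range~\eqref{eq:valid-R-range}, a bound of the schematic form
\[
  \frac{h}{C r} \leq F^\nu_r + \mathcal{E}_h(r),
\]
where \(\mathcal{E}_h(r)\) quantifies the portion of the expected spectral flux across \(|k|=r\) carried by wavenumber jumps larger than order \(h\). The plan is to split \(\mathcal{E}_h = \mathcal{E}_{\mathrm{det}} + \mathcal{E}_{\mathrm{rand}}\) by decomposing \(u = \Pi_{\leq h/C} u + \Pi_{> h/C} u\): the first piece is deterministic (structural, coming from the low-frequency part of \(u\) acting on the annulus itself) and of size at most \(C^q h^{1-q}/r\); the second is controlled by \(\|\Pi_{>h/C} u\|_{L^\infty}\), which via the Sobolev embedding \(\|\Pi_{>K} u\|_{L^\infty} \lesssim K^{-q} \|u\|_{C^{q+(d+1)/2}}\) gives the pointwise bound \(\mathcal{E}_{\mathrm{rand}}(r) \lesssim C^q h^{1-q} r^{-1} \|u_0\|_{C^{q+(d+1)/2}}\).

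With this in hand, \(r \in B^\nu_{h,\alpha}\) forces \(\mathcal{E}_h(r) \geq (1/C - \alpha)h/r\), so at least one of the two pieces contributes a comparable fraction. Applying Markov at level \(1\) to the deterministic part against the logarithmic measure,
\[
  \mu_{N_0,R}\!\left\{r : r\mathcal{E}_{\mathrm{det}}(r) \geq c\alpha h\right\} \leq (c\alpha h)^{-1} \int r\, \mathcal{E}_{\mathrm{det}}(r)\, d\mu_{N_0,R}(r) \leq C^q h^{2-q} \alpha^{-1},
\]
where the extra factor of \(h\) relative to the pointwise size of \(\mathcal{E}_{\mathrm{det}}\) comes from integrating against the \(1/r\) weight of \(\mu_{N_0,R}\). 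Applying Markov at level \(p\) to the random part and invoking the moment bound \(\E\|u_0\|_{C^{q+(d+1)/2}}^p\) yields \(C^{p+q}\E\|u_0\|^p \alpha^{p-1}\); summing the two contributions gives~\eqref{eq:general-general}. The a.s.\ version~\eqref{eq:general-as} then follows by sending \(p \to \infty\) with \(\|u_0\|_{C^{q+(d+1)/2}} \leq D\) and choosing \(\alpha \asymp C^{-1} D^{-1}\) to balance the two terms.

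The main obstacle is entirely in establishing Theorem~\ref{thm:flux} itself with the clean \(h^{1-q}/r\) error dependence; once that is available, Theorem~\ref{thm:general} is a careful but essentially mechanical Markov argument, with the principal bookkeeping going into the \(h\)-scaling of the errors and the interaction with the \(1/r\) density of \(\mu_{N_0,R}\). A minor secondary point is verifying that the \(t \to \infty\) limit exists and commutes with the flux inequality, which should follow from the mixing assumption~\eqref{eq:u-mixes} via a standard stationarity and regularization argument.
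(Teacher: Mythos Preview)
Your high-level picture is right---mass must flux through each annulus, and Theorem~\ref{thm:flux} plus Markov should control the bad set---but the mechanism you describe for the second term is incorrect, and the mechanism for the first term is missing an essential ingredient.

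The most serious issue is the claimed ``Markov at level~$p$'' producing the factor $\alpha^{p-1}$. A Chebyshev/Markov inequality at level~$p$ applied to a random quantity produces a \emph{negative} power of the threshold, i.e.\ $\alpha^{-p}$, whereas $\alpha^{p-1}$ is a \emph{positive} power that shrinks as $\alpha\to 0$. The paper does not get $\alpha^{p-1}$ from Markov on $\mathcal{E}_{\mathrm{rand}}$; it comes from a bootstrap in Proposition~\ref{prop:B-w-bound}. One integrates the flux bound~\eqref{eq:pointwise-flux-inequality} over $r\in E := B^\nu_{w_q,\beta}$ and applies H\"older in $(t,\omega)$ with exponents $(p,q')$, using the pointwise fact $\int w^{-1}*m^\nu_{t}(r)\,dr \leq 1$ to replace the $q'$-th power of the $r$-integral by the integral itself. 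On $E$ one has $w^{-1}*m^\nu \leq \beta/r$ by definition, so the right-hand side contains $\bigl(\int_E r^{-1}\,dr\bigr)^{1/q'}$, yielding a self-referential inequality $\bigl(\int_E r^{-1}\,dr\bigr)^{1/p} \leq C(\E\|w_q\hat u\|_{\ell^1}^p\log R)^{1/p}\beta^{1/q'}$, which after raising to the $p$-th power gives $\beta^{p-1}$. Your splitting of $u$ by frequency cannot reproduce this: both $\Pi_{\leq h/C}u$ and $\Pi_{>h/C}u$ are random, so neither piece is ``deterministic,'' and no amount of Chebyshev on $\mathcal{E}_{\mathrm{rand}}$ yields a positive power of $\alpha$.

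Two further gaps. First, the H\"older step above only works if the time integral is restricted to $[0,C\log r]$ so that $\bigl(\int_0^T \E\|w_q\hat u_t\|^p\,dt\bigr)^{1/p}$ is finite; this is the content of Proposition~\ref{prop:mix-dissipation-bound} and its corollary, and is the real reason mixing enters, not a ``minor secondary point.'' Second, the term $C^q h^{2-q}\alpha^{-1}$ does not come from the low-frequency part of $u$. In the paper it arises from the tail of the convolution $w_q^{-1}*m^\nu$ outside $[r,r+h]$, and bounding $\int_1^R \int_{|s-z|>h/4} w_q^{-1}(|s-z|)\,m^\nu(dz)\,ds$ requires the cumulative upper bound $m^\nu([1,R]) \leq C\log R$ of Proposition~\ref{prop:upper-density-bound}. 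Without this input you cannot show that the far-field mass contributes only $O(h^{2-q}\log R)$, and your sketch does not invoke it.
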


\begin{remark}
As mentioned in the introduction, the range~\eqref{eq:valid-R-range} of $R$ for which Theorem~\ref{thm:general} holds corresponds (up to a $\log$) to the expected range of frequencies in Batchelor's prediction. Indeed, the same conclusions for a smaller range $C \lor h \leq R \leq \nu^{-1/2+\varepsilon}$ for $\varepsilon > 0$ are strictly weaker, since the smaller range misses a constant fraction of radii, i.e.\ $\mu_{N_0, R_{\text{high}}}([\nu^{-1/2+\varepsilon}, R_\text{high}]) \geq 2\varepsilon + o_{\nu \to 0^+}(1)$ for $R_\text{high} := C^{-1}(\nu \log \nu^{-1})^{-1/2}$.
\end{remark}
\begin{remark}
  In the interest of simplicity, we stated a suboptimal bound with respect to the regularity of $u_0$. Indeed, if we only assumed bounds on $\E\|u_0\|_{C^{q+d/2+\varepsilon}}^p$ for some $\varepsilon > 0$ (or even $\E\|u_0\|_{C^{q+d/2}\log^{1+\varepsilon}C}^p$, etc.) then the same conclusion~\eqref{eq:general-general} would follow with an extra factor of $\varepsilon^{-p/2}$ in the second term; we only use the $C^{q+(d+1)/2}$ norm of $u_0$ to estimate $\||k|^q\hat{u}_0\|_{\ell^1}$ in~\eqref{eq:only-use-of-Cq}.
\end{remark}
\begin{remark}
  Although Theorem~\ref{thm:general} only states that the set $B^\nu_{h,\alpha}$ (where the \textit{lower} bound of the prediction~\eqref{eq:our-bound}is violated) is sparse, we observe that the corresponding statement for the \textit{upper} bound follows directly from the cumulative bound~\eqref{eq:cumulative-prediction} from~\cite{bedrossian_batchelor_2021}. Indeed, since Fourier mass is nonnegative, Markov's inequality shows that a cumulative upper bound implies a corresponding pointwise upper bound for most (measured in the sense of logarithmic density) frequencies---as is explained in Subsection~\ref{ss:upper-bounds}. On the other hand, the cumulative \textit{lower} bound itself says nothing about pointwise lower bounds; it is consistent with the cumulative bounds that the Fourier mass is supported on a set of frequencies with arbitrarily small logarithmic density.
\end{remark}

Some velocity fields satisfying the assumptions of~\eqref{eq:general-general} (but not those of~(\ref{eq:general-as}, \ref{eq:compact-general}, \ref{eq:compact-as})) are constructed in~\cite{bedrossian_almost-sure_2021} as solutions to stochastically forced fluid equations. In particular, one can take $u_t$ to be the solution to a stochastically forced 2D incompressible Navier--Stokes equation.

Our second result is that, if $u_t$ is compactly supported in Fourier space, then we do not need to send $h \to \infty$, but rather only need to take $h$ sufficiently large.

\begin{theorem}
\label{thm:compact-fourier-support}
    Suppose that for some deterministic $L>0$ and a random magnitude $X \geq 1$, we have that
    \[|\hat u_0(k)| \leq X\indc_{|k| \leq L}.\]
    Then there is $C > 0$, depending on $L$, such that for all
        \[C \leq R \leq\frac{\nu^{-1/2}}{C (\log \nu^{-1})^{1/2}},\]
    and $p \geq 2,$ we have the density estimate
    \begin{equation}
        \label{eq:compact-general}
    \mu_{C,R}(B^\nu_{2L,\alpha}) \leq C^p \E X^p \alpha^{p-1}.
    \end{equation}
    In particular, if we have the a.s.\ bound
    \[X \leq D,\]
    then
    \begin{equation}
        \label{eq:compact-as}
    \mu_{C,R}(B^\nu_{2L,(CD)^{-2}}) =0.
    \end{equation}
\end{theorem}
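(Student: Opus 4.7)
The plan is to prove Theorem~\ref{thm:compact-fourier-support} by specializing the flux inequality (Theorem~\ref{thm:flux}) to the bandlimited setting. The key point is that when $\hat u_0$ is supported in $\{|k| \leq L\}$, the advection term in Fourier space, $\widehat{u_t \cdot \nabla \psi^\nu_t}(k) = \sum_{k'} \hat u_t(k') \cdot i(k-k') \hat\psi^\nu_t(k-k')$, is a convolution whose kernel is supported in $\{|k'| \leq L\}$. Consequently, the net flux of Fourier mass across the sphere $|k| = r$ depends on $\hat\psi^\nu$ only through its values on the annulus $r - L \leq |k| \leq r + L$ of width $2L$. This bandlimit removes the fundamental obstruction of Theorem~\ref{thm:general}, which had to compensate for high-frequency tails of $\hat u_0$ by sending the annulus width $h \to \infty$.

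First, I would revisit the derivation of~\eqref{eq:general-general} under the bandlimit assumption, tracking the origin of the two summands on the right-hand side. The first term $C^q h^{2-q}\alpha^{-1}$ arises from discarding frequencies $|k'| > h/2$ in the Fourier expansion of $u_0$ during the flux estimate; this discarded contribution is identically zero once $h \geq 2L$ under our assumption, so that term drops out entirely. The second term $C^{p+q}\E\|u_0\|_{C^{q+(d+1)/2}}^p \alpha^{p-1}$ is bounded using the Sobolev-norm equivalence $\|u_0\|_{C^{q+(d+1)/2}} \leq C(L,q) X$, which holds for trigonometric polynomials of degree $\leq L$; fixing $q$ at the smallest value permitted by the flux inequality and absorbing $L$-dependent prefactors into the single constant $C = C(L)$ produces~\eqref{eq:compact-general}.

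Finally, for the a.s.\ statement~\eqref{eq:compact-as}, the bound in~\eqref{eq:compact-general} can be rewritten as $\alpha^{-1}(CD\alpha)^p$ under $X \leq D$. Sending $p \to \infty$ shows that the bad set has zero $\mu_{C,R}$-measure whenever $CD\alpha < 1$, which is ensured by $\alpha \leq (CD)^{-2}$ after enlarging $C$. The main obstacle will be carefully verifying that the near-field error term is genuinely absent, not merely small, under the bandlimit assumption---otherwise one could not take $p \to \infty$ cleanly---and that the $L$-dependent constants from the Sobolev embedding on bandlimited functions can be uniformly absorbed into the single constant $C = C(L)$ appearing in the theorem statement.
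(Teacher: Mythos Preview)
There is a genuine gap in your identification of where the term $C^q h^{2-q}\alpha^{-1}$ comes from. In the proof of Theorem~\ref{thm:general}, that term does \emph{not} arise from discarding high frequencies of $\hat u_0$; it arises from the tail of the polynomial weight $w_q^{-1}(s)=\tfrac12(1+s)^{-q}$ acting on the \emph{mass measure} $m^\nu$, namely from
\[
\int_{r+h/4}^{r+3h/4}\int_{[1,\infty)\setminus[r,r+h]} w_q^{-1}(|s-z|)\,m^\nu(\dif z)\,\dif s.
\]
The bandlimit on $\hat u_0$ says nothing about $m^\nu$ outside $[r,r+h]$, so if you run the Theorem~\ref{thm:general} argument with the polynomial weight $w_q$, this remainder is still present and still of order $h^{2-q}\alpha^{-1}$. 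With $h=2L$ fixed, that term is a nonzero constant times $\alpha^{-1}$, which blows up as $\alpha\to 0$ and blocks the $p\to\infty$ limit needed for~\eqref{eq:compact-as}.

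The paper instead exploits the freedom in Theorem~\ref{thm:flux} to choose the weight, taking
\[
w(s):=2L\,\indc_{s\le L}+\infty\,\indc_{s>L}.
\]
Then $\|w(|k|)\hat u_0\|_{\ell^1}\le C(L)\,X$ is finite precisely because of the bandlimit, and crucially $w^{-1}*m^\nu(r)=\tfrac{1}{2L}m^\nu([r-L,r+L])$ \emph{exactly}, so $B^\nu_{2L,\alpha}=B^\nu_{w,\alpha}+L$ with no remainder whatsoever. Proposition~\ref{prop:B-w-bound} then gives~\eqref{eq:compact-general} directly, and~\eqref{eq:compact-as} follows by sending $p\to\infty$. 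Your intuition that the flux across $\{|k|=r\}$ only sees mass in an $L$-neighborhood is correct, but it is encoded by switching to the indicator weight, not by tracking the polynomial-weight proof and hoping a term drops out.
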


Some velocity fields satisfying the assumptions of~\eqref{eq:compact-as} are constructed in~\cite{cooperman_harris_2024,blumenthal_exponential_2023,navarro-fernandez_exponential_2025}. In particular, one can take $u_t$ to be the alternating sine shear mixing example of Pierrehumbert~\cite{pierrehumbert_tracer_1994}. We further note that~\cite{bedrossian_almost-sure_2021} shows that Stokes flow with finitely many stochastically forced Fourier modes satisfies the assumptions of~\eqref{eq:compact-general}, with $\E X^p <\infty$ for all $p$.

In order to prove Theorems~\ref{thm:general} and~\ref{thm:compact-fourier-support}, it is useful to introduce the the \textit{asymptotic mass measure}~$m^\nu$, a positive measure on radii $r \in [0,\infty)$, defined so that for $E\subseteq [0,\infty)$,
\begin{equation}
    \label{eq:mass-measure-def}
    m^\nu(E) := \sum_{|k| \in E} \lim_{t \to\infty}\E |\hat \psi^\nu_t(k)|^2.
\end{equation}
Under this definition, we have the equivalent characterization
\[B^\nu_{h,\alpha} = \big\{r \in [1,\infty) : m^\nu([r,r+h])\leq \frac{\alpha h}{r}\big\}.\]
We will also use the \textit{random instantaneous mass measure} associated to the $\phi_t$ process, defined for $E \subseteq [0,\infty)$ by
\begin{equation}
\label{eq:instantaneous-mass-measure-def}
m_{t}^\nu(E) := \sum_{|k| \in E} |\hat \phi^\nu_t(k)|^2.
\end{equation}
The It\^o isometry implies
\[m^\nu(E) =\E \int_0^\infty m^\nu_t\,\dif t.\]

Our last result is a general flux relation that is the primary tool we use to prove Theorems~\ref{thm:general} and~\ref{thm:flux}.

\begin{theorem}
    \label{thm:flux}
    For any increasing weight $w : [0,\infty) \to [1,\infty]$, for all $\nu,t,r,$ as well as all random events $\omega,$ we have the pointwise inequality,
    \[\frac{d}{dt} \|\Pi_{\geq r} \phi_t^\nu\|_{L^2}^2 + 2 \nu  \|\Pi_{\geq r} \nabla \phi_t^\nu\|_{L^2}^2   \leq 4\pi r \|w(|k|) \hat u_t\|_{\ell^1} \, w^{-1}*m^\nu_t(r),\]
    where
    \begin{equation}
      \label{eq:conv-def}
      w^{-1} * m_{t,\omega}^\nu(r) := \int_0^\infty \frac{1}{w(|r-z|)} m_{t,\omega}^\nu(\dif z).
    \end{equation}
  \end{theorem}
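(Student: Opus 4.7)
\textbf{Proof proposal for Theorem~\ref{thm:flux}.}
The plan is to derive a pathwise energy identity for $\Pi_{\geq r}\phi^\nu_t$, use the divergence-freeness of $u_t$ to reduce the advection term to a flux across the frequency shell $|k|=r$, and bound that flux in Fourier space by a weighted Cauchy--Schwarz estimate with the weight chosen to exploit the hypothesis that $w\colon[0,\infty)\to[1,\infty]$ is increasing. Since~\eqref{eq:phi-equation} is a deterministic PDE once a realization of $u$ is fixed, and $u\in L^1_{\mathrm{loc},t}W^{1,\infty}_x$ a.s., the whole argument is genuinely pathwise in $\omega$---no It\^o corrections appear.

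\textit{Step 1: Pathwise energy identity and flux isolation.} First I would apply $\Pi_{\geq r}$ to~\eqref{eq:phi-equation} and test against $\Pi_{\geq r}\phi^\nu_t$, obtaining
\[
\frac{d}{dt}\|\Pi_{\geq r}\phi^\nu_t\|_{L^2}^2 + 2\nu\,\|\Pi_{\geq r}\nabla\phi^\nu_t\|_{L^2}^2 \;=\; -2\langle \Pi_{\geq r}\phi^\nu_t,\ u_t\cdot\nabla\phi^\nu_t\rangle.
\]
Splitting $\nabla\phi^\nu_t = \nabla\Pi_{\geq r}\phi^\nu_t + \nabla\Pi_{<r}\phi^\nu_t$, the ``high-high'' self-interaction vanishes because $u_t$ is divergence-free:
\[
\langle \Pi_{\geq r}\phi^\nu_t,\ u_t\cdot\nabla\Pi_{\geq r}\phi^\nu_t\rangle \;=\; \tfrac{1}{2}\int u_t\cdot\nabla|\Pi_{\geq r}\phi^\nu_t|^2\,dx \;=\; 0.
\]
It therefore suffices to bound $|\langle \Pi_{\geq r}\phi^\nu_t,\ u_t\cdot\nabla\Pi_{<r}\phi^\nu_t\rangle|$ by $2\pi r\,\|w(|k|)\hat u_t\|_{\ell^1}\,w^{-1}\!*m^\nu_t(r)$; doubling via $-2\,\mathrm{Re}(\cdot)\le 2|\cdot|$ then gives the claimed $4\pi r$.

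\textit{Step 2: Weighted Cauchy--Schwarz in Fourier.} Writing out the cross term in Fourier,
\[
\langle\Pi_{\geq r}\phi^\nu_t,\ u_t\cdot\nabla\Pi_{<r}\phi^\nu_t\rangle \;=\; 2\pi i\sum_{|k|\ge r,\ |j|<r}\overline{\hat\phi^\nu_t(k)}\,\hat u_t(k-j)\cdot j\,\hat\phi^\nu_t(j),
\]
and since $|j|<r$, the factor $j$ contributes at most $r$ in absolute value, so matters reduce to controlling $S:=\sum_{|k|\ge r,\,|j|<r}|\hat u_t(k-j)||\hat\phi^\nu_t(k)||\hat\phi^\nu_t(j)|$. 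The geometric key is that for $|k|\ge r$ and $|j|<r$ the triangle inequality yields $|k-j|\ge\max(|k|-r,\,r-|j|)$, so by the monotonicity of $w$ and $w\ge 1$,
\[
w(|k-j|)\;\ge\;\sqrt{w(|k|-r)\,w(r-|j|)}\;=\;\sqrt{w(|r-|k||)\,w(|r-|j||)}.
\]
Multiplying and dividing by $w(|k-j|)$, using this bound to split $1/w(|k-j|)$ symmetrically into factors attached to $k$ and to $j$, substituting $\ell=k-j$, and applying Cauchy--Schwarz in $k$ at each fixed $\ell$ gives
\[
S \;\le\; \|w(|\ell|)\hat u_t\|_{\ell^1}\,\Bigl(\sum_{|k|\ge r}\frac{|\hat\phi^\nu_t(k)|^2}{w(|k|-r)}\Bigr)^{1/2}\Bigl(\sum_{|j|<r}\frac{|\hat\phi^\nu_t(j)|^2}{w(r-|j|)}\Bigr)^{1/2}.
\]
The two inner sums partition $\{k\in\Z^d\}$, so their sum equals $\sum_k|\hat\phi^\nu_t(k)|^2/w(|r-|k||) = w^{-1}\!*m^\nu_t(r)$ by~\eqref{eq:conv-def} and~\eqref{eq:instantaneous-mass-measure-def}; AM--GM then yields $S\le\|w(|\ell|)\hat u_t\|_{\ell^1}\cdot w^{-1}\!*m^\nu_t(r)$, closing the argument.

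\textit{Where the difficulty lies.} The only nontrivial ingredient is the multiplicative bound on $w(|k-j|)$, and this is precisely where both $w\ge 1$ (so that $w(\max(\cdot,\cdot))$ dominates the geometric mean of $w$-values) and monotonicity of $w$ (so that the triangle inequality transfers to $w$) are used essentially; neither hypothesis can be dropped. Everything else is a routine pathwise energy computation, and I expect the stated constant $4\pi r$ to be slightly loose---a cleaner AM--GM step in fact produces $2\pi r$.
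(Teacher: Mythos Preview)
Your proof is correct and follows essentially the same route as the paper: compute the energy identity for $\Pi_{\geq r}\phi^\nu_t$, use $\nabla\cdot u_t=0$ to kill the high--high interaction and extract the factor $|j|<r$, then insert the weight via the key inequality $w(|k-j|)\ge\sqrt{w(|k|-r)\,w(r-|j|)}$ (valid since $|k-j|\ge(|k|-r)+(r-|j|)$ and $w$ is increasing). The only cosmetic difference is that the paper enlarges the double sum to all $k,j$ and invokes Young's convolution inequality, whereas you keep the restriction $|k|\ge r,\ |j|<r$, apply Cauchy--Schwarz, and then AM--GM the two complementary sums; your observation that this sharper bookkeeping yields $2\pi r$ rather than $4\pi r$ is correct. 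One small quibble with your closing commentary: the step $\max(a,b)\ge\sqrt{ab}$ holds for all $a,b\ge 0$, so the hypothesis $w\ge 1$ is not in fact ``used essentially'' in the inequality---it serves only to guarantee $w^{-1}*m^\nu_t(r)\le\|\phi^\nu_t\|_{L^2}^2<\infty$.
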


\subsection{Acknowledgments}

We thank Vlad Vicol for helpful feedback on an early draft. W.C.\ was partially supported by NSF grant DMS-2303355. K.R.\ was partially supported by the NSF Collaborative Research Grant DMS-2307681 and the Simons Foundation through the Simons Investigators program.

\section{Discussion}

\label{sec:discussion}

\subsection{Background and previous results}

\subsubsection{Physical background}
By arguing that a smooth fluid velocity should be locally well  approximated by a linear hyperbolic flow, Batchelor~\cite{batchelor_small-scale_1959} predicted that for a forced passive scalar with $\nu$ diffusivity, one should typically have
\begin{equation}
\label{eq:modewise-batchelor}
\text{if } 1\ll  |k| \ll \nu^{-1/2}, \quad |\hat \psi^\nu(k)|^2 \approx |k|^{-d}.
\end{equation}
See~\cite[Section 1]{bedrossian_batchelor_2021} for a thorough discussion of historical and physical context.

The key motivation for studying Batchelor's prediction is that a passive scalar exhibits the most mathematically accessible of the advective cascades that are central to the study of fluid turbulence. The most famous such cascade is the turbulence cascade of an infinite Reynolds number 3D fluid, described phenomenologically by Kolmogorov's K41 theory~\cite{kolmogorov_degeneration_1941,kolmogorov_dissipation_1941,kolmogorov_local_1941}; see~\cite[Chapter 6]{frisch_turbulence_1995} for a lucid exposition. K41 theory predicts that for a fluid velocity $v$ in the infinite Reynold's number limit, one typically has
\[|k|^2 |\hat v|^2(k) \approx |k|^{-5/3};\]
this is sometimes referred to as the ``five-thirds law'' (which is something of a misnomer, as the ``five-thirds law'' is apparently false, due to intermittency corrections~\cite[Chapter 8]{frisch_turbulence_1995}). Obukhov~\cite{obukhov_structure_1949} and Corrsin~\cite{corrsin_spectrum_1951} make analogous predictions on the Fourier spectra of passive scalars advected by rough ``turbulent'' velocity fields.

All of these spectral predictions share the feature that a certain distribution of mass over the length scales (or equivalently Fourier wavenumbers) is expected due to the action of the advection. This cascading creation of length scales with a precise and universal distribution of mass is one of the central phenomena of turbulence.

The cases of a turbulent fluid and a passive scalar advected by a rough velocity field are considerably more difficult than the Batchelor setting. The phenomena of anomalous dissipation and intermittency make the situation exceedingly complex. Manifestations of these phenomena in the fluid equations are apparent in the convex integration solutions to the fluid equations and Onsager's conjecture; see for example~\cite{constantin_onsagers_1994,de_lellis_h-principle_2012,de_lellis_dissipative_2013,buckmaster_anomalous_2015,isett_proof_2018,novack_intermittent_2023} and in particular the reviews~\cite{buckmaster_convex_2019,de_lellis_weak_2022}. In the setting of a passive scalar with rough advecting flow, one has the freedom to choose the advecting velocity field, and so the phenomenon of anomalous dissipation can be rigorously exhibited as is shown in~\cite{drivas_anomalous_2022,colombo_anomalous_2023,armstrong_anomalous_2025,burczak_anomalous_2023,elgindi_norm_2024,rowan_anomalous_2024,johansson_anomalous_2024,hess-childs_universal_2025} among others. However, obtaining precise control of the distribution of Fourier mass in equilibrium is still beyond current analysis.

The Batchelor regime---in which a smooth velocity field advects a passive scalar with small diffusivity---has neither anomalous dissipation nor (seemingly) intermittency and is therefore much more amenable to analysis. The primary phenomenon of interest in this setting is (exponential) mixing, which is the mechanism that underlies the cascade of mass to small length scales. Mixing is, at this point, a well studied phenomenon, as is made clear below.

\subsubsection{Remarks on the model}

We model our fluid velocity field as a spatially smooth, time stationary random process. This is consistent with taking the fluid velocity to be the solution to a stochastically forced fluid equation with fixed positive viscosity. Taking the fluid velocity to have some randomness is a straightforward way to access ``generic'' behavior of fluid velocities. As discussed below, it is also only in this random setting that we have results guaranteeing uniform-in-diffusivity exponential mixing, which we assume in~\eqref{eq:u-mixes}.

Besides the random fluid velocity, we also take our passive scalar equation~\eqref{eq:randomly-forced-transport} to have a temporal white noise forcing. This is a somewhat idealized model of a generic forcing, though it allows us to exploit the It\^o isometry to avoid having to consider subtle cancellations.

We then consider the mass distribution of the solution to~\eqref{eq:randomly-forced-transport} at long times. As in~\cite{bedrossian_batchelor_2021}, this is equivalent to studying the stationary measure for the process defined by~\eqref{eq:randomly-forced-transport} and as such gives information about the expected distribution of Fourier mass in the stationary measure. We refrain from introducing stationary measures to simplify the exposition.

\subsubsection{Previous results on mixing, enhanced dissipation, and the Batchelor spectrum}

Working in a similar setting,~\cite{bedrossian_batchelor_2021} studies a cumulative version of the Batchelor prediction, where Fourier modes are summed over a ball. In this case, the prediction becomes~\eqref{eq:cumulative-prediction}. This cumulative spectrum is clearly established in~\cite{bedrossian_batchelor_2021} as a consequence of a uniform-in-diffusivity exponential mixing estimate of the form~\eqref{eq:u-mixes}. More precisely, the \textit{upper bound} is a consequence of exponential mixing, while the \textit{lower bound} is a consequence of regularity of $u$, which guarantees that the Fourier mass takes $\approx \log R$ time to escape the ball of radius $R$ in Fourier space.

The takeaway from~\cite{bedrossian_batchelor_2021} is that uniform-in-diffusivity exponential mixing causes some form of the Batchelor spectrum. However, their analysis cannot give a more precise form than the cumulative bound: they do not rule out that large regions of Fourier space are devoid of mass.

The uniform-in-diffusivity exponential mixing needed to establish the cumulative Batchelor spectrum in~\cite{bedrossian_batchelor_2021} is provided by the series of papers~\cite{bedrossian_lagrangian_2022,bedrossian_almost-sure_2022,bedrossian_almost-sure_2021} in which the authors establish a uniform-in-diffusivity exponential mixing result for the solutions to a variety of stochastically forced fluid equations, most notably the 2D incompressible Navier-Stokes equation at fixed viscosity. Since~\cite{bedrossian_almost-sure_2021}, there have been additional constructions of uniform-in-diffusivity exponential mixers. Building on~\cite{blumenthal_exponential_2023},~\cite{cooperman_harris_2024} shows a uniform-in-diffusivity mixing result for a variety of velocity fields that are sampled independently on each unit time interval $[n,n+1],$ such as the Pierrehumbert mixing example~\cite{pierrehumbert_tracer_1994}. \cite{navarro-fernandez_exponential_2025} uses Villani’s hypocoercivity method~\cite{villani_hypocoercivity_2009} to prove uniform-in-diffusivity exponential mixing for the velocity field $u(x+ B_t)$ where $B_t$ is a standard Brownian motion and $u$ is the ``simple cellular flow''. \cite{coti_zelati_mixing_2024} shows uniform-in-diffusivity negative regularity exponential mixing for a white-in-time, correlated-in-space self-similar Gaussian field.

Uniform-in-diffusivity exponential mixing is somewhat harder to establish than exponential mixing for the $\nu=0$ case. However, the $\nu=0$ has seen more attention: while all examples of uniform-in-diffusivity exponential mixing are random, there are deterministic examples of exponential mixers for $\nu=0$. For a particular choice of initial data,~\cite{alberti_exponential_2019} provides a uniformly spatially Lipschitz velocity field that exponentially mixes. The works \cite{elgindi_optimal_2023,myers_hill_exponential_2022} provide uniformly spatially Lipschitz velocity fields that exponentially mix all (sufficiently regular) initial data. The authors of the current work also establish $\nu=0$ exponential mixing in the random case for the 2D Navier-Stokes equation with degenerate stochastic forcing~\cite{cooperman_exponential_2024}. It seems unlikely that there is a general result showing that $\nu=0$ exponential mixing implies uniform-in-diffusivity exponential mixing. However, we note that the ``negative regularity mixing'' of the $\nu=0$ problem established~\cite{bedrossian_negative_2024} implies uniform-in-diffusivity exponential mixing, though negative regularity mixing only appears to be possible in the setting of a random velocity field.

We note that even without exponential mixing, one is still able to deduce some form of a lower bound for the Batchelor spectrum. In particular, following the reasoning of Subsection~\ref{ss:special-case}, we derive the convolved lower bound on the mass measure $m^\nu$ given by~\eqref{eq:convolved-mass-bound}:
\[\text{if }\|w \hat u\|_{\ell^1} \leq A_w\; \text{a.s} \quad \text{then for all } N_0 \leq r \leq D_\nu,\, w^{-1}*m^\nu(r)\geq \frac{1}{16 \pi A_w r},\]
where $D_\nu$---defined by~\eqref{eq:dissipation-scale-def}---is the inverse of the length scale below which $\frac{1}{2}$ of the energy is dissipated. Thus one gets a version of the Batchelor spectrum lower bound (weak due to the ``averaging'' present from the $w^{-1} * m^\nu$ convolution) provided $D_\nu \to \infty$ as $\nu \to 0.$ One can readily verify that this is equivalent to the advecting flow $u$ being relaxation enhancing (also referred to as dissipation enhancing)~\cite{constantin_diffusion_2008}. As is shown in~\cite{feng_dissipation_2019,zelati_relation_2020}, relaxation enhancing is implied by mixing---with any mixing rate, not just exponentially fast mixing. Enhanced dissipation has been shown in an even wider variety of settings than exponential mixing, particularly in the case that there is a shear structure to the flow which precludes exponential mixing~\cite{bedrossian_enhanced_2017,feng_dissipation_2019,albritton_enhanced_2022,coti_zelati_enhanced_2023,villringer_enhanced_2024}.

The recent work~\cite{blumenthal_sparsity_2024} studies the Batchelor spectrum in the case of alternating applications of pure diffusion $e^{\nu \Delta}$ and a diffeomorphism of the torus which is a perturbation of Arnold's cat map, which is notably \textit{not} realized as the unit time map of a flow on $\T^2$. One result of~\cite{blumenthal_sparsity_2024} is that, in their setting, the version of the Batchelor spectrum given by~\eqref{eq:our-bound} is violated while the cumulative Batchelor spectrum prediction~\eqref{eq:cumulative-prediction} still holds. In particular,~\eqref{eq:our-bound} fails strongly for $r \leq \ep^{-\delta}$ where $\delta>0$ and $\ep$ is a measure of the size of the perturbation. The current paper shows that violations of this kind are not possible in the continuous time setting.

A second result of~\cite{blumenthal_sparsity_2024} shows that, in their setting, $\lim_{n \to \infty} \E |\hat \psi^\nu_n(k)|^2$ is essentially supported on $k$ in a cone in Fourier space, having very little mass on modes outside the cone. This leads to strong violations of the mode-wise Batchelor prediction~\eqref{eq:modewise-batchelor} for all radii. We expect that this phenomenon is also possible in continuous time, but we leave the construction of such an example to future work.

\subsection{Upper bounds on the mass distribution}
\label{ss:upper-bounds}

Our main results give lower bounds on the mass measure $m^\nu$ defined in~\eqref{eq:mass-measure-def}. It is then natural to ask about matching upper bounds. Additionally, we only give lower bounds for $|k| \ll \nu^{-1/2}$. For $|k| \gg \nu^{-1/2}$, the dynamics should be dominated by the $\nu \Delta$ dissipation, and so the lower bound should not generally hold.

\subsubsection{\texorpdfstring{The $|k| \ll \nu^{-1/2}$ range}{The low frequency range}}

First, when $|k| \ll \nu^{-1/2}$, we note the following upper bound. This bound is essentially identical to the cumulative Batchelor spectrum upper bound obtained in~\cite{bedrossian_batchelor_2021} and relies on the uniform-in-diffusivity exponential mixing.

\begin{proposition}
\label{prop:upper-density-bound}
    For all $R \geq 2$,
    \begin{equation}
    \label{eq:cumulative-upper-bound}
    m^\nu([1,R]) \leq C \log R.
    \end{equation}
\end{proposition}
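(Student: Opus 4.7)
The plan is to bound $m^\nu([1,R])$ by representing it as a time integral over the unforced process $\phi^\nu_t$, then splitting the integral into a short-time regime (where a trivial $L^2$ energy bound is enough) and a long-time regime (where we can afford to lose the factor of $R^2$ coming from the annulus because the mixing estimate~\eqref{eq:u-mixes} gives exponential decay in $H^{-1}$).

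First I would use the It\^o isometry together with the time-stationarity of $u_t$ to write
\[
\lim_{t\to\infty}\E|\hat\psi^\nu_t(k)|^2 \;=\; \E\int_0^\infty |\hat\phi^\nu_r(k)|^2\,\dif r,
\]
exactly as stated just after~\eqref{eq:instantaneous-mass-measure-def}. Summing over $1\leq |k|\leq R$ and swapping sum and expectation yields
\[
m^\nu([1,R]) \;=\; \E\int_0^\infty \sum_{1\leq|k|\leq R}|\hat\phi^\nu_r(k)|^2\,\dif r.
\]

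Next I would split the $\dif r$ integral at a threshold $T>0$ to be chosen. On $[0,T]$, I bound the inner sum by the full $L^2$ norm and then by the $L^2$ energy inequality (since $u_r$ is divergence-free and $-\nu\Delta$ is dissipative)
\[
\sum_{1\leq |k|\leq R}|\hat\phi^\nu_r(k)|^2 \;\leq\; \|\phi^\nu_r\|_{L^2}^2 \;\leq\; \|g\|_{L^2}^2 \;=\; 1.
\]
On $[T,\infty)$, I use that $|k|\leq R$ on the annulus to upgrade $|\hat\phi^\nu_r(k)|^2 \leq R^2 |k|^{-2}|\hat\phi^\nu_r(k)|^2$, giving
\[
\sum_{1\leq |k|\leq R}|\hat\phi^\nu_r(k)|^2 \;\leq\; R^2 \|\phi^\nu_r\|_{\dot H^{-1}}^2.
\]
Applying the mixing hypothesis~\eqref{eq:u-mixes} then gives $\E\|\phi^\nu_r\|_{\dot H^{-1}}^2 \leq Ke^{-\gamma r}$.

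Combining the two pieces,
\[
m^\nu([1,R]) \;\leq\; T + R^2 K \int_T^\infty e^{-\gamma r}\,\dif r \;=\; T + \frac{KR^2}{\gamma}e^{-\gamma T}.
\]
Choosing $T = \frac{2}{\gamma}\log R$ balances the two terms and yields $m^\nu([1,R]) \leq \frac{2}{\gamma}\log R + \frac{K}{\gamma} \leq C\log R$ for $R\geq 2$, which is the desired bound. There is no substantial obstacle here; the only thing to be slightly careful about is justifying the It\^o isometry identity with the $t\to\infty$ limit (routine, via a change of variables $s\mapsto t-s$ and monotone convergence) and checking that the $L^2$ energy inequality survives the random drift $u_r$ (which it does, since $\int u_r\cdot\nabla\phi^\nu_r \cdot\phi^\nu_r = 0$ under $\div u_r = 0$).
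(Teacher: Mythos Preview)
Your proof is correct and follows essentially the same approach as the paper's: represent $m^\nu([1,R])$ as a time integral of $\E\|\Pi_{\leq R}\phi^\nu_t\|_{L^2}^2$, split at a threshold proportional to $\log R$, use the $L^2$ energy bound on the short-time piece and the $H^{-1}$ mixing estimate (paying the factor $R^2$) on the long-time piece. The paper writes the threshold as $\alpha\log R$ and chooses $\alpha$ large, while you write it as $T=\tfrac{2}{\gamma}\log R$; these are the same argument.
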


\begin{proof}
    Using the exponential mixing, we note that for any $\alpha>0$
    \begin{align*}
        m^\nu([1,R]) &= \int_0^\infty \E \|\Pi_{\leq R} \phi_t^\nu\|_{L^2}^2
        \\&\leq \int_0^{\alpha \log R} \E \|\phi_t^\nu\|_{L^2}^2 + C R^2 \int_{\alpha \log R}^\infty e^{-C^{-1} t}\,\dif t
        \\&\leq \alpha \log R + C R^2 e^{-C^{-1} \alpha \log R}.
    \end{align*}
    Choosing $\alpha$ large enough, we conclude.
\end{proof}

One can then use Chebyshev's inequality to obtain a log-density bound on the ``overcharged'' radii. We define
\begin{equation}
    \overline{B}^\nu_{h,\alpha} := \big\{ r \in [1,\infty) : m^\nu([r, r+h]) \geq \frac{\alpha h}{ r}\big\}
\end{equation}
and compute
    \[
        \mu_{1,R}(\overline{B}^\nu_{h,\alpha}) = \frac{1}{\log R}\int_{\overline{B}^\nu_{h,\alpha} \cap [1,R]} \frac{1}{r}\,\dif r \leq \frac{1}{\alpha h \log R} \int_1^R m^\nu([r,r+h])\,\dif r \leq \frac{1}{\alpha \log R} m^\nu([1,R+h]) \leq C\alpha^{-1}.
    \]
We have proven the following density estimate.
\begin{proposition}
    For all $R \geq 2$,
    \[\mu_{1,R}(\overline{B}^\nu_{h,\alpha}) \leq C\alpha^{-1}.\]
\end{proposition}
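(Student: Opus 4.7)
The plan is a direct Chebyshev/Markov-type argument using the cumulative upper bound supplied by Proposition~\ref{prop:upper-density-bound}. By the definition of $\overline{B}^\nu_{h,\alpha}$, every $r$ in this set satisfies $m^\nu([r,r+h]) \geq \alpha h / r$, which rearranges to the pointwise inequality $\frac{1}{r} \leq \frac{1}{\alpha h} m^\nu([r,r+h])$. Substituting this into the logarithmic density~\eqref{eq:log-density-measure-def} and dropping the restriction to the bad set yields
\[\mu_{1,R}(\overline{B}^\nu_{h,\alpha}) \leq \frac{1}{\alpha h \log R} \int_1^R m^\nu([r,r+h])\,\dif r.\]

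Next, I would swap the order of integration. Writing $m^\nu([r,r+h]) = \int \indc_{[r,r+h]}(s)\, m^\nu(\dif s)$ and applying Fubini, the inner Lebesgue integral in $r$ is bounded by $h$ for every fixed $s$, and the $s$-support is contained in $[1, R+h]$. Hence
\[\int_1^R m^\nu([r,r+h])\,\dif r \leq h\, m^\nu([1,R+h]).\]
Applying Proposition~\ref{prop:upper-density-bound} to the right-hand side gives $m^\nu([1,R+h]) \leq C\log(R+h)$. Since $R \geq 2$ and $h$ is fixed, one has $\log(R+h) \leq C \log R$ (with $h$-dependence absorbed into the constant $C$), and combining these inequalities yields $\mu_{1,R}(\overline{B}^\nu_{h,\alpha}) \leq C \alpha^{-1}$, as claimed.

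There is essentially no real obstacle: the argument is a textbook Markov-style layer-cake estimate, and all deep input has been concentrated in the cumulative bound~\eqref{eq:cumulative-upper-bound} (which itself relies on the uniform-in-diffusivity exponential mixing assumption~\eqref{eq:u-mixes}). The only mildly delicate point is the comparison $\log(R+h) \lesssim \log R$, which simply requires that the additive offset $h$ be treated as part of the universal constant, as is standard under the convention $C = C(K,\gamma,N_0,d)$ established in the paper.
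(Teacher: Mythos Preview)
Your argument is correct and matches the paper's proof line for line: the Chebyshev bound $\tfrac{1}{r} \leq \tfrac{1}{\alpha h} m^\nu([r,r+h])$ on the bad set, the Fubini swap giving $\int_1^R m^\nu([r,r+h])\,\dif r \leq h\, m^\nu([1,R+h])$, and the appeal to~\eqref{eq:cumulative-upper-bound}. The one quibble is that the paper's convention $C = C(K,\gamma,N_0,d)$ does \emph{not} include $h$, so $\log(R+h) \lesssim \log R$ should be justified by restricting to $h \leq R$ (as in the hypothesis~\eqref{eq:valid-R-range} of Theorem~\ref{thm:general}) rather than by absorbing $h$ into $C$---but the paper's own computation glosses over this just as you do.
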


We note that, in some sense, this upper bound matches the second term of the lower bound density estimate in~\eqref{eq:general-general} or the lower bound estimate in~\eqref{eq:compact-general}. It however does not match the estimates~\eqref{eq:general-as} or~\eqref{eq:compact-as}, which do not require taking a limit in $\alpha$.

\subsubsection{\texorpdfstring{The $|k| \gg \nu^{-1/2}$ range}{The |k| >> nu{-1/2]} range}}

The bound given by Proposition~\ref{prop:upper-density-bound} holds for all $R \geq 2$, however one can get a much sharper statement for $R \gg \nu^{-1/2}$. Computing the It\^o differential, we have the energy estimate
\[\frac{d}{dt} \E \|\psi^\nu_t\|_{L^2}^2 = - 2\nu \E\|\nabla \psi^\nu_t\|_{L^2}^2 + \|g\|_{L^2}^2.\]
By the It\^o isometry, Duhamel's principle, and the stationarity of the $u_t$ process,
\[\frac{d}{dt} \E \|\psi^\nu_t\|_{L^2}^2  = \E  \|\phi^\nu_t\|_{L^2}^2 \leq e^{-C^{-1} \nu t} \stackrel{t \to \infty}{\to} 0.\]
Taking $t \to \infty$, we have the energy balance
\[
    2\nu \lim_{t \to\infty}\E\|\nabla \psi^\nu_t\|_{L^2}^2 =\|g\|_{L^2}^2 =1.
\]
Thus for any $R \geq 1$,
\[m^\nu([R,\infty)) = \lim_{t \to \infty} \E \|\Pi_{\geq R} \psi^\nu_t\|_{L^2}^2 \leq R^{-2} \lim_{t \to \infty} \E\|\nabla \psi_t^\nu\|_{L^2}^2 \leq \frac{1}{\nu R^2}.\]
This implies for $r \gg \nu^{-1/2}$, effectively all the annuli have very little mass. For example, for $r \geq \nu^{-1}$, not even a single annulus has $\frac{2}{r}$ mass, as for every such $r$,
\[m^\nu([r,r+h]) \leq m^\nu([r,\infty)) \leq \frac{1}{\nu r^2} \leq \frac{1}{r}.\]

\subsection{Overview of the argument and the proof of a special case}
\label{ss:special-case}

The tool for the proof is Theorem~\ref{thm:flux}, which is a pointwise inequality for every $t, r, \nu$, as well as every random event $\omega$. As can be seen from its short proof, Theorem~\ref{thm:flux} is effectively a direct computation of a ``flux'', together with the introduction of a weight before applying Young's convolution inequality.

\subsubsection{\texorpdfstring{The case of an almost sure bound on $u$}{The case of an almost sure bound on u}}

To understand the utility of Theorem~\ref{thm:flux}, we first discuss a special case of the main results when we assume an almost sure bound on $u_t$, that is, we suppose that for some weight function $w$, we have some constant $A_w >0$ such that
\[\|w(|k|) \hat u_t\|_{\ell^1} \leq A_w \qquad \text{almost surely.}\]
In this case, taking an expectation and integrating over $t\in [0,\infty)$, Theorem~\ref{thm:flux} implies
\[4\pi A_w rw^{-1}*m^\nu(r) + \|\Pi_{\geq r} g\|_{L^2}^2 \geq 2\nu \int_0^\infty \E\|\Pi_{\geq r} \nabla \phi_t^\nu\|_{L^2}^2\,\dif t,\]
where we use that $\phi^\nu_t \stackrel{t\to\infty}{\to} 0$.

Next, we specialize to the \textit{Batchelor regime}. The Batchelor regime should be beyond the \textit{injection range} in which the effect of the forcing $g$ is dominant, so we should only consider $r \geq N_0$ so that $\|\Pi_{\geq r} g\|_{L^2}^2\leq \frac{1}{4}$. On the other hand, the Batchelor regime should be below the dissipative regime, where the dissipation is dominant. As such we should only consider $r \leq D_\nu$ where $D_\nu$ satisfies
\begin{equation}
\label{eq:dissipation-scale-def}
2\nu \int_0^\infty \E\|\Pi_{\geq D_\nu} \nabla \phi_t^\nu\|_{L^2}^2\,\dif t = \frac{1}{2}.
\end{equation}
Note that Proposition~\ref{prop:mix-dissipation-bound} gives that, under the exponential mixing assumption, $D_\nu \geq \frac{\nu^{-1/2}}{C (\log \nu^{-1})^{1/2}}$.

In the Batchelor regime, $N_0 \leq r \leq D_\nu$, we have
\begin{equation}
    \label{eq:convolved-mass-bound}
w^{-1}*m^\nu(r)\geq \frac{1}{16 \pi A_w r}.
\end{equation}
We then want $\frac{1}{w}$ to decay rapidly. In the arguments below, we will consider
\[\frac{1}{w(s)} = (1+|s|)^{-p} \text{ or } \frac{1}{w(s)} = \frac{1}{L} \indc_{|s| \leq L}.\]
In either case,~\eqref{eq:convolved-mass-bound} states that the mass measure has order at least $\frac{1}{r}$ \textit{after convolving with some rapidly decaying function.}

In the case that $\frac{1}{w(s)} = \frac{1}{L} \indc_{|s| \leq L}$, the convolution can be directly expanded to give
\[m^\nu([r,r+2L]) \geq \frac{1}{C A_w r},\]
which is then the same bound as is given by~\eqref{eq:compact-as}. Note that we need $\|w(|k|) \hat u_t\|_{\ell^1}$ to be a.s.\ bounded, which requires that $\hat u_t$ has compact support in Fourier space (as $w \equiv +\infty$ on $(L, \infty)$).

In the case $\frac{1}{w(s)} = (1+|s|)^{-p}$, we can rather compute
\[\frac{1}{16 \pi A_w} r^{-1} * \indc_{[h/4,3h/4]} \leq \frac{1}{16\pi A_w}  w^{-1} * m^\nu * \indc_{[h/4,3h/4]} \leq \frac{2}{16\pi A_w}  m^\nu * \indc_{[0,h]} + \rho_{h,w}*m^\nu,\]
where $w^{-1} * \indc_{[h/4,3h/4]} \leq 2 \indc_{[0,h]} + \rho_{h,w}$, and one can show that $\rho_{h,w}$ both decays quickly at infinity and is small in $L^1$ as $h \to \infty$. Then, noting that $m^\nu * \indc_{[0,h]}(r) = m^\nu([r,r+h])$, we have that, for $r$ in the Batchelor regime,
\[m^\nu([r,r+h]) > \frac{h}{CA_w r} \quad \text{or} \quad \rho_{h,w} * m^\nu(r) \geq \frac{1}{CA_w r}.\]
Thus the density where $m^\nu([r,r+h]) \leq \frac{h}{CA_w r}$ is controlled by the density where $\rho_{h,w} * m^\nu(r) \geq \frac{1}{CA_w r}$. To conclude, we compute a bound
\begin{equation}
\label{eq:remainder-bound}
\int_1^R\rho_{h,w} * m^\nu(r) \leq M_w(h) \log R,
\end{equation}
where $M_w(h) \to 0$ as $h \to \infty$. A Chebyshev-like bound allows us to conclude that the logarithmic density of $r$ such that $\rho_{h,w} * m^\nu(r) \geq \frac{1}{CA_w r}$ goes to $0$ as $h \to \infty$, exactly as desired. This then gives (a qualitative version of)~\eqref{eq:general-as}.

In order to get the bound~\eqref{eq:remainder-bound}, we need to use the upper bound on $m^\nu([1,R])$ given by Proposition~\ref{prop:upper-density-bound}, which in turn relies on exponential mixing. As such, in order to get a density bound on the set of undercharged annuli in the case that $u$ has only finite regularity---corresponding to the weight function $\frac{1}{w(s)} = (1+|s|)^{-p}$---we need to use the exponential mixing assumption more strongly. This seems necessary, as otherwise we could have the inequality~\eqref{eq:convolved-mass-bound} hold despite having large regions of Fourier space devoid of mass. In that case, huge spikes of mass would compensate for the decay induced by $w^{-1}$. Exponential mixing allows us to rule out the possibility of too many large spikes.

\subsubsection{The general case}

The argument of the previous section has made our strategy clear: first use an integrated version of Theorem~\ref{thm:flux} to get the right lower bound on the convolved density of the mass measure, then show that this lower bound on the convolved mass density implies that most intervals have the right mass.

In the case that we do not have an almost sure bound on $u_t$---but rather only have some finite moment bound on $u_t$---we have to be more careful. We cannot simply take expectation and integrate Theorem~\ref{thm:flux} over $t \in [0,\infty)$ to obtain a bound on $m^\nu$, as the term $\|w(|k|) \hat u_t\|_{\ell^1}$ no longer comes out of the $\E \int_0^\infty$ integral cleanly. Instead we must integrate over \textit{finite time} and apply H\"older's inequality to the $\E \int_0^T$ integral. We will now pay by a constant growing in time, and as such we want to take $T$ as small as possible. However, in order for Theorem~\ref{thm:flux} to be useful at some fixed frequency $r$ on the time interval $[0,T]$, we need that some nontrivial amount of Fourier mass has been transferred from wavenumbers $|k| < r$ to wavenumbers $|k| \geq r$ on the time interval $[0,T]$. This is ensured for $T \approx \log r$ by the uniform-in-diffusivity exponential mixing.

By localizing to the time interval $[0,C \log r]$ and carefully dealing with the new error introduced by applying H\"older's inequality, we can essentially recover the argument sketched above in the case we only have finite moments on $u_t$. This does however introduce the additional requirement that we take $\alpha \to 0$ as well as $h \to \infty$ to get that $\mu_{N_0, R}(B^\nu_{h,\alpha}) \to 0,$ in contrast to the case of almost sure bounds on $u$, where we could always work at some fixed sufficiently small $\alpha$.

\subsection{Interpretation of the main results and necessity of the hypotheses}

We now briefly discuss why the stated results are reasonably natural. First we consider~\eqref{eq:compact-as}. This result says that for a.s.\ bounded and compactly supported in Fourier space advecting flows \textit{every} annulus of sufficiently large width have the predicted amount of Fourier mass. This is reasonable, as the compactness of Fourier support of the advecting flow prevents mass of the passive scalar from jumping too far in Fourier space, as is clear from~\eqref{eq:fourier-evolution}, while the almost sure bound prevents mass from moving in Fourier space too fast so as to undercharge an annulus in Fourier space despite the mass moving through the annulus.

Next we consider~\eqref{eq:general-as}. In this case, we need to take $h \to \infty$ to ensure that the density of Fourier annuli which have the predicted amount of mass tends to $1$. Note, however, that we do not need to change $\alpha$. This is sensible as the a.s.\ bound on $u$ ensures that mass cannot move too fast in Fourier space, which is what should govern the choice of $\alpha$, but the fact that $u$ no longer has compact Fourier support allows large Fourier modes in $u$ to allow mass to jump over some annuli, again as is suggested by~\eqref{eq:fourier-evolution}. However, it cannot be that the Fourier mass is \textit{typically} using these large Fourier modes of $u$ to jump over annuli in Fourier space, since---due to the regularity of $u$---the large Fourier modes of $u$ have very small coefficients, and so if the Fourier mass were \textit{typically} traveling using these large Fourier modes, it would move to $\infty$ \textit{too slowly to be compatible with exponential mixing}.

That is, exponential mixing prescribes a rate at which the Fourier mass must move to $\infty$ and the regularity of $u$ says that Fourier mass cannot possibly move to $\infty$ fast enough if the mass moves purely along the large Fourier modes of $u$. Thus, the mass must \textit{typically} travel along the small Fourier modes, which causes typical annuli with sufficiently large width to be charged with the predicted amount of Fourier mass.

Now we consider~\eqref{eq:compact-general}. In this case, we have compact support in Fourier space---and as such only ever need to consider annuli of a fixed sufficiently large width---but we do not have an a.s.\ bound on $u$. As such, there is no a.s.\ upper bound on how fast mass can move across Fourier modes. It is possible for the advection to move mass through a given annulus very rapidly, and thus only charge the annulus with $\frac{\alpha h}{r}$ mass for some very small $\alpha$. The moment bounds on $u$ ensure that $u$ is \textit{typically} not so large, and so \textit{typically} the annuli are not severely undercharged with mass. However, we see now that it is necessary to take $\alpha$ small in~\eqref{eq:compact-general} if we want the undercharged annuli to be sparse.

Finally we consider~\eqref{eq:general-general}. In this setting, the problems of~\eqref{eq:general-as} and~\eqref{eq:compact-general} are present. While \textit{typically} Fourier mass moves at a controlled rate via small Fourier modes in $u$, it can atypically move arbitrarily fast or along large Fourier modes of $u$. To show that the set of undercharged annuli are sparse, we must therefore simultaneously take $h \to \infty$ and $\alpha \to 0$.

One may observe that our argument uses the time-stationarity of $t \mapsto u_t$ only once, when relating the asymptotic (deterministic) and instantaneous (random) mass measures. We hope that a more careful analysis using stationarity can prove a similar result for \textit{every} annulus, not just a high logarithmic density of annuli, but this is out of scope of the current paper.

\section{Proof of the main results}

We start by precisely defining the Fourier projector notation used above.

\begin{definition}
    \label{def:fourier-projector}
    For a radius $r \geq 0$, we define the complementary $L^2$ orthogonal Fourier projections $\Pi_{\geq r}, \Pi_{<r} : L^2(\T^d) \to L^2(\T^d)$,
\begin{equation}
\label{eq:Fourier-projectors-def}
\Pi_{\geq r} f := \sum_{|k| \geq r} e^{2\pi i k \cdot x} \hat f(k) \quad \text{and} \quad\Pi_{< r} f := \sum_{|k| < r} e^{2\pi i k \cdot x} \hat f(k) =  f- \Pi_{\geq r} f.
\end{equation}
\end{definition}

We now Theorem~\ref{thm:flux} by direct computation.

\begin{proof}[Proof of Theorem~\ref{thm:flux}]

Using the equation~\eqref{eq:phi-equation}, we have
\begin{equation}
\label{eq:fourier-evolution}
\frac{\dif}{\dif t} |\hat \phi^\nu_t|^2(k) = -8 \pi^2 |k|^2 \nu |\hat \phi^\nu_t|^2(k) +2\pi i k \cdot \sum_j   \hat \phi^\nu_t(k)\overline{ \hat u_t(k-j) \hat \phi_t^\nu(j)}-\overline{\hat \phi^\nu_t(k)} \hat u_t(k-j) \hat \phi_t^\nu(j).
\end{equation}
We then have
\begin{align*}
\frac{d}{dt} \|\Pi_{\geq r} \phi_t^\nu\|_{L^2}^2 &=-2\nu\|\Pi_{\geq r} \nabla \phi_t^\nu\|_{L^2}^2
    +\sum_{|k| \geq r} 2\pi i k \cdot \sum_j   \hat \phi^\nu_t(k)\overline{ \hat u_t(k-j) \hat \phi_t^\nu(j)}-\overline{\hat \phi^\nu_t(k)} \hat u_t(k-j) \hat \phi_t^\nu(j)
    \\&= -2\nu\|\Pi_{\geq r} \nabla \phi_t^\nu\|_{L^2}^2  +  \sum_{|k| \geq r} 2\pi i j \cdot \sum_{|j| < r}   \hat \phi^\nu_t(k)\overline{ \hat u_t(k-j) \hat \phi_t^\nu(j)}-\overline{\hat \phi^\nu_t(k)} \hat u_t(k-j) \hat \phi_t^\nu(j)
     \\&\leq -2\nu \|\Pi_{\geq r} \nabla \phi_t^\nu\|_{L^2}^2  +  4\pi r \sum_{|k| \geq r}   \sum_{|j| < r}   |\hat \phi^\nu_t(k)|| \hat u_t(k-j)| |\hat \phi_t^\nu(j)|
\end{align*}
where we use that $k \cdot \hat u_t(k-j) = j \cdot \hat u_t(k-j)$ as $\nabla \cdot u_t =0$. Then for an arbitrary increasing weight $w : [0,\infty) \to [1,\infty],$
\begin{align*}
     \sum_{|k| \geq r}   \sum_{|j| < r}   &|\hat \phi^\nu_t(k)|| \hat u_t(k-j)| |\hat \phi_t^\nu(j)|
     \\&\leq  \sum_k   \sum_j  w(||k| - r|)^{-1/2}|\hat \phi^\nu_t(k)| w(|k-j|)|\hat u_t(k-j)| w(||j| - r|)^{-1/2} |\hat \phi_t^\nu(j)|
     \\&\leq \| w(|k|) \hat u_t\|_{\ell^1} \sum_k w(||k|-r|)^{-1} |\hat \phi_t^\nu(k)|^2 = \|w(|k|) \hat u_t\|_{\ell^1} w^{-1} * m^\nu_t(r),
\end{align*}
where we use Young's convolution inequality for the second inequality. Combining the inequalities, we conclude.
\end{proof}

This next proposition provides a lower bound for the left-hand side of Theorem~\ref{thm:flux} when integrated over the time interval $[0,C \log r]$. This allows us to localize the time on which the flux occurs, which is needed to apply H\"older's inequality to the $\E \int_0^T$ integral when bounding the $\|w(|k|) \hat u_t\|_{\ell^1}$ term.

\begin{proposition}
\label{prop:mix-dissipation-bound}
    For all
    \[2  \leq r \leq \frac{\nu^{-1/2}}{C (\log \nu^{-1})^{1/2}} \quad \text{and}\quad T \geq C\log r,\] and we have that
    \[\E \|\Pi_{\geq r} \phi_{T}^\nu\|_{L^2}^2 + 2\nu \int_0^{T} \E\|\Pi_{\geq r} \nabla \phi_t^\nu\|_{L^2}^2\,\dif t \geq \frac{1}{2}\]
\end{proposition}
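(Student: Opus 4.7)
The plan is to use the $L^2$ energy identity for $\phi^\nu_t$ and show that the complementary low-frequency contributions are each smaller than $\tfrac{1}{4}$. Testing equation~\eqref{eq:phi-equation} against $\phi^\nu_t$ and integrating yields the a.s.\ identity
\[\|\phi^\nu_T\|_{L^2}^2 + 2\nu \int_0^T \|\nabla\phi^\nu_t\|_{L^2}^2\,\dif t = \|g\|_{L^2}^2 = 1,\]
since $u_t$ is divergence-free. Decomposing each norm by Pythagoras into $\Pi_{<r}$ and $\Pi_{\geq r}$ pieces and taking expectations reduces the claim to
\[\E\|\Pi_{<r}\phi^\nu_T\|_{L^2}^2 + 2\nu\int_0^T \E\|\Pi_{<r}\nabla\phi^\nu_t\|_{L^2}^2\,\dif t \leq \tfrac{1}{2}.\]

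For the terminal term, I would invoke the spectral-localization bound $\|\Pi_{<r}\phi\|_{L^2}^2 \leq (1+r^2)\|\phi\|_{H^{-1}}^2$ together with the mixing hypothesis~\eqref{eq:u-mixes} to get $\E\|\Pi_{<r}\phi^\nu_T\|_{L^2}^2 \leq (1+r^2)Ke^{-\gamma T}$, which falls below $\tfrac{1}{4}$ as soon as $T \geq C\log r$ for $C$ large enough in terms of $K$ and $\gamma$.

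For the time-integral term, I would combine the Bernstein-type bound $\|\Pi_{<r}\nabla\phi\|_{L^2}^2 \leq 4\pi^2 r^2\|\Pi_{<r}\phi\|_{L^2}^2$ with two upper bounds on $\E\|\Pi_{<r}\phi^\nu_t\|_{L^2}^2$: the trivial one $\E\|\phi^\nu_t\|_{L^2}^2 \leq 1$ (valid for all $t$ by the same energy identity) and the mixing one $(1+r^2)Ke^{-\gamma t}$ (sharp for large $t$). Taking the minimum of these two estimates and integrating---constant up to a crossover time $t_\star \sim \gamma^{-1}\log(1+r^2)$ and then exponential decay after---produces a bound of the shape $C\nu r^2 \log r$, which is $\leq \tfrac{1}{4}$ precisely within the stated range $r \leq \nu^{-1/2}/(C(\log \nu^{-1})^{1/2})$.

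The argument is essentially routine given the mixing hypothesis~\eqref{eq:u-mixes}; the substantive point is that the constraint on the upper endpoint of the allowed range of $r$ is forced by the requirement that the low-frequency dissipation integral remain small, which is exactly where the Batchelor scale $\nu^{-1/2}$ (up to a logarithmic correction) enters. The only mild obstacle is bookkeeping: matching the constant $C$ appearing in $T \geq C\log r$ with the one appearing in the upper bound on $r$ so that both low-frequency contributions are simultaneously controlled by $\tfrac{1}{4}$.
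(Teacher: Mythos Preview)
Your proposal is correct and essentially identical to the paper's proof: both start from the energy identity, split off the $\Pi_{<r}$ contributions via Pythagoras, kill the terminal low-frequency piece by $\|\Pi_{<r}\phi\|_{L^2}^2 \lesssim r^2\|\phi\|_{H^{-1}}^2$ together with~\eqref{eq:u-mixes}, and bound the low-frequency dissipation integral by splitting time at $\sim \log r$, using the trivial $L^2$ bound before and the mixing bound after to obtain $C\nu r^2\log r$. The only cosmetic difference is that the paper writes the split at exactly $C\log r$ rather than at the crossover time $t_\star$, which amounts to the same estimate.
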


\begin{proof}
    By the energy identity and the initial condition $\|\phi_0^\nu\|_{L^2}^2 = \|g\|_{L^2}^2 = 1$, we have
    \[ \|\phi_{T}^\nu\|_{L^2}^2  + 2\nu \int_0^{T}  \|\nabla \phi_t^\nu\|_{L^2}^2\,\dif t = 1.\]
    Thus
    \[\E \|\Pi_{\geq r} \phi_{T}^\nu\|_{L^2}^2 + 2\nu \int_0^{T} \E\|\Pi_{\geq r} \nabla \phi_t^\nu\|_{L^2}^2\,\dif t = 1 - \E \|\Pi_{< r} \phi_{T}^\nu\|_{L^2}^2 - 2\nu \int_0^{T} \E\|\Pi_{< r} \nabla \phi_t^\nu\|_{L^2}^2\,\dif t\]
    By the exponential mixing, we have that
    \[ \E \|\Pi_{< r} \phi_{T}^\nu\|_{L^2}^2  \leq  (2\pi r)^2\E \|\phi_{T}^\nu\|_{H^{-1}}^2 \leq (2 \pi r)^2K  e^{-\gamma T} \leq (2 \pi r)^2e^{-\gamma C \log r} \leq \frac{1}{4},\]
    where we choose $C$ large enough. For the last term, we compute, for any $\alpha >0$,
    \begin{align*}
    2\nu \int_0^T \E\|\Pi_{< r} \nabla \phi_t^\nu\|_{L^2}^2\,\dif t
    &\leq (2 \pi r)^2 \nu \int_0^{C\log r} \E \|\phi_t^\nu\|_{L^2}^2\,\dif t + (2 \pi r)^4 \int_{C \log r}^\infty \E\|\phi_t^\nu\|_{H^{-1}}^2\dif t
    \\&\leq  (2 \pi r)^2 \nu C \log r + (2 \pi r)^4  \int_{C \log r }^\infty Ke^{-\gamma t} \dif t
    \\&\leq   C \nu (2 \pi r)^2 \log r + (2 \pi r)^4K e^{- \gamma C \log r}
    \\&\leq \frac{1}{4},
\end{align*}
where the last line follows by taking $C$ large enough.
\end{proof}

\begin{corollary}
    For all
    \[C  \leq r \leq \frac{\nu^{-1/2}}{C (\log \nu^{-1})^{1/2}} \quad \text{and}\quad  T \geq C \log r,\]
    we have
    \begin{equation}
    \label{eq:pointwise-flux-inequality}
    \E \int_0^T \|w(|k|) \hat u_t\|_{\ell^1} w^{-1}*m^\nu_t(r)\,\dif t  \geq \frac{1}{Cr}.
    \end{equation}
\end{corollary}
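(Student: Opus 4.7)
The plan is to integrate the pointwise flux inequality of Theorem~\ref{thm:flux} in time, take expectations, and then use Proposition~\ref{prop:mix-dissipation-bound} to absorb the left-hand side into a universal constant. Concretely, Theorem~\ref{thm:flux} can be rewritten as
\[
\frac{d}{dt}\|\Pi_{\geq r}\phi_t^\nu\|_{L^2}^2 + 2\nu\|\Pi_{\geq r}\nabla \phi_t^\nu\|_{L^2}^2 \leq 4\pi r \,\|w(|k|)\hat u_t\|_{\ell^1}\, w^{-1}*m_t^\nu(r),
\]
so integrating over $[0,T]$, taking expectation, and using $\phi_0^\nu = g$ yields
\[
\E\|\Pi_{\geq r}\phi_T^\nu\|_{L^2}^2 + 2\nu\int_0^T \E\|\Pi_{\geq r}\nabla\phi_t^\nu\|_{L^2}^2\,\dif t \leq \|\Pi_{\geq r} g\|_{L^2}^2 + 4\pi r\,\E\int_0^T \|w(|k|)\hat u_t\|_{\ell^1}\, w^{-1}*m_t^\nu(r)\,\dif t.
\]

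Next, I would use the hypothesis that $r \geq C \geq N_0$ (enlarging $C$ if needed) together with the definition of $N_0$ to bound $\|\Pi_{\geq r} g\|_{L^2}^2 \leq 1/4$. Since $C \log r \leq T$ and $r$ lies in the admissible range, Proposition~\ref{prop:mix-dissipation-bound} applies and gives
\[
\E\|\Pi_{\geq r}\phi_T^\nu\|_{L^2}^2 + 2\nu\int_0^T \E\|\Pi_{\geq r}\nabla\phi_t^\nu\|_{L^2}^2\,\dif t \geq \tfrac{1}{2}.
\]
Combining the two previous displays yields
\[
4\pi r\, \E\int_0^T \|w(|k|)\hat u_t\|_{\ell^1}\, w^{-1}*m_t^\nu(r)\,\dif t \geq \tfrac{1}{2} - \tfrac{1}{4} = \tfrac{1}{4},
\]
and dividing through by $4\pi r$ gives the claimed bound with, e.g., constant $16\pi \leq C$.

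There is essentially no obstacle here: both required inputs (the pointwise flux inequality and the lower bound on the surviving high-frequency mass at time $T$) have already been established, and the proof amounts to aligning the constants. The only thing to check carefully is that the constant $C$ in the statement can be chosen uniformly large enough to simultaneously ensure $r \geq N_0$, the hypotheses of Proposition~\ref{prop:mix-dissipation-bound}, and that $16\pi \leq C$; this is harmless since $C$ is allowed to depend on $K,\gamma,N_0,d$.
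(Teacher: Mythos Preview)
Your proposal is correct and matches the paper's proof essentially line for line: integrate Theorem~\ref{thm:flux} over $[0,T]$, take expectations, apply Proposition~\ref{prop:mix-dissipation-bound} for the lower bound $\tfrac12$, use $r\geq C\geq N_0$ to bound $\|\Pi_{\geq r}g\|_{L^2}^2\leq\tfrac14$, and rearrange.
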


\begin{proof}
    Using Theorem~\ref{thm:flux} and then Proposition~\ref{prop:mix-dissipation-bound},
    \begin{align*}
    4\pi \E\int_0^T \|w(|k|) \hat u_t\|_{\ell^1} r w^{-1}*m^\nu_t(r)\,\dif t &\geq  \E \|\Pi_{\geq r} \phi_{T}^\nu\|_{L^2}^2 + 2\nu \int_0^{T} \E\|\Pi_{\geq r} \nabla \phi_t^\nu\|_{L^2}^2\,\dif t - \|\Pi_{\geq r} g\|_{L^2}^2
    \\&\geq \frac{1}{2} - \|\Pi_{\geq r} g\|_{L^2}^2 \geq \frac{1}{4},
    \end{align*}
    where we take $N_0 \leq C \leq r$ so that $\|\Pi_{\geq r} g\|_{L^2}^2 \leq \frac{1}{4}$. Rearranging, we conclude.
\end{proof}

For the arbitrary increasing weight $w$, we define a set analogous to $B^\nu_{h,\alpha}$ as follows
\begin{equation}
\label{eq:bad-radii-weighted}
    B^\nu_{w,\alpha} := \big\{r \in [1,\infty) : w^{-1} * m^\nu(r) \leq \frac{\alpha}{r}\big\}.
\end{equation}
$B^\nu_{w,\alpha}$ is then the radii for which the convolved density $w^{-1} * m^\nu(r)$ is undercharged with mass with lower bound $\alpha$. The next result shows that the log-density of this set is well controlled as $\alpha \to 0$.

\begin{proposition}
\label{prop:B-w-bound}
    Suppose
    \[\int_0^\infty \frac{1}{w(s)}\,\dif s \leq \frac{1}{2}.\]
    Then there is a constant $C > 0$ such that, for any
    \[C\leq R \leq \frac{\nu^{-1/2}}{C (\log \nu^{-1})^{1/2}},\]
    and any $p\geq 1,$ we have that
    \[\mu_{N_0, R}(B^\nu_{w,\alpha}) \leq  C^p  \E \|w(|k|) \hat u_0\|_{\ell^1}^p\alpha^{p-1}.\]
\end{proposition}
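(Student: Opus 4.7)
The plan is to start from the corollary's pointwise flux lower bound, split the product $\|w(|k|)\hat u_t\|_{\ell^1} \cdot w^{-1}*m^\nu_t(r)$ using a weighted Hölder inequality that exploits the $\alpha/r$ smallness afforded by $r \in B^\nu_{w,\alpha}$, and then integrate over $r$ to convert a pointwise lower bound into a density bound. Writing $X(t) := \|w(|k|)\hat u_t\|_{\ell^1}$ and $Y_r(t) := w^{-1}*m^\nu_t(r)$ for brevity, It\^o isometry gives $w^{-1}*m^\nu(r) = \E\int_0^\infty Y_r(t)\,\dif t$, so $r \in B^\nu_{w,\alpha}$ is equivalent to $\E\int_0^\infty Y_r\,\dif t \leq \alpha/r$.

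The key maneuver is to apply Hölder with respect to the weighted measure $Y_r(t)\,\dif t\,\dif\P$ and the pairing $X \cdot 1$: for $p > 1$ with conjugate $p'$,
\[
\E\int_0^T X Y_r\,\dif t \leq \left(\E\int_0^T X^p Y_r\,\dif t\right)^{1/p} \left(\E\int_0^T Y_r\,\dif t\right)^{1/p'}.
\]
Combining with the corollary's lower bound $\tfrac{1}{Cr} \leq \E\int_0^{C\log r} X Y_r\,\dif t$ and the bad-radii bound $\E\int_0^{C\log r} Y_r\,\dif t \leq \alpha/r$, raising to the $p$-th power and using $p/p' = p-1$ yields
\[
\frac{1}{r} \leq C^p\alpha^{p-1}\,\E\int_0^{C\log r} X(t)^p Y_r(t)\,\dif t \qquad\text{for every } r \in B^\nu_{w,\alpha}.
\]

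Next I would integrate this in $r$ over $B^\nu_{w,\alpha} \cap [N_0, R]$, replacing the time horizon $C\log r$ by the uniform upper bound $C\log R$ (possible by nonnegativity) and applying Fubini. The $r$-integral of $Y_r(t)$ unfolds the convolution and, thanks to the hypothesis $\int_0^\infty 1/w(s)\,\dif s \leq 1/2$, is controlled by $m^\nu_t([0,\infty)) = \|\phi^\nu_t\|_{L^2}^2 \leq 1$, where the last inequality is the standard energy identity for the unforced $\phi$-equation. Time-stationarity of $u_t$ then gives $\E X(t)^p = \E\|w(|k|)\hat u_0\|_{\ell^1}^p$, so
\[
\int_{B^\nu_{w,\alpha} \cap [N_0,R]} \frac{\dif r}{r} \leq C^{p+1}\alpha^{p-1}\log R\cdot\E\|w(|k|)\hat u_0\|_{\ell^1}^p.
\]
Dividing by $\log(R/N_0)$, which is comparable to $\log R$ once $R$ exceeds a constant depending on $N_0$, yields the claimed density bound. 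The case $p=1$ follows by taking $p \to 1^+$.

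The main subtlety is the choice of Hölder split. Pairing $X$ against the constant $1$ with respect to the weighted measure $Y_r\,\dif t\,\dif\P$ is what simultaneously (i) pushes the $\alpha/r$ smallness from $B^\nu_{w,\alpha}$ onto the "free" factor, and (ii) leaves behind the combination $X^p Y_r$ which is the unique form compatible with the subsequent Fubini and $L^1(1/w)$ argument, producing a quantity controlled by the conserved-quantity $\|\phi_t\|_{L^2}^2$. A secondary book-keeping point is that the $r$-dependent time cutoff must be enlarged to the uniform $C\log R$ before Fubini, costing only a harmless logarithmic factor in the final constant.
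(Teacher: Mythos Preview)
Your argument is correct and uses the same ingredients as the paper's proof (the corollary's flux lower bound, the pointwise-in-$(t,\omega)$ bound $\int_0^\infty Y_r(t)\,\dif r \leq 1$ coming from $\|w^{-1}\|_{L^1}\leq\tfrac12$ and energy conservation, the It\^o isometry identity $w^{-1}*m^\nu(r)=\E\int_0^\infty Y_r\,\dif t$, and stationarity of $u_t$), but it applies them in a different order via a different H\"older split. The paper first integrates in $r$ over $E:=B^\nu_{w,\alpha}\cap[N_0,R]$ and only then applies H\"older in $(t,\omega)$ to the pair $X(t)$ and $Z(t):=\int_E Y_r(t)\,\dif r$; since $Z\leq 1$ pointwise it uses $Z^q\leq Z$, extends the time integral to $[0,\infty)$, invokes the definition of $B^\nu_{w,\alpha}$ to bound $\E\int_0^\infty Z\leq\alpha\int_E r^{-1}\,\dif r$, and finally solves a self-referential inequality in $\int_E r^{-1}\,\dif r$. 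Your route instead applies H\"older at each fixed $r$ with respect to the \emph{weighted} measure $Y_r\,\dif t\,\dif\P$, which places the $\alpha/r$ smallness directly on the ``$1$'' factor and leaves $X^pY_r$; only afterwards do you integrate in $r$ and use $\int Y_r\,\dif r\leq 1$. Your variant is arguably slightly cleaner, since it avoids both the $Z^q\leq Z$ step and the rearrangement, and yields the bound in one pass; the paper's version has the minor aesthetic advantage that the $B^\nu_{w,\alpha}$ definition is used via the asymptotic measure $m^\nu$ rather than through the time-integrated instantaneous measure. Both give the identical final estimate.
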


\begin{proof}
    Note first that
    \begin{equation*}
      \int w^{-1} * m_{t,\omega}^\nu(r)\,\dif r \leq 2\|w^{-1}\|_{L^1} \int m_{t,\omega}^\nu(r)\,\dif r \leq \sum_k |\hat \phi^\nu_t|^2(k) \leq 1,
    \end{equation*}
    so for $q \geq 1$ we have
    \begin{equation*}
      \left(\int w^{-1} * m_{t,\omega}^\nu(r)\,\dif r\right)^{q} \leq \int w^{-1} * m_{t,\omega}^\nu(r)\,\dif r.
    \end{equation*}
    Integrating~\eqref{eq:pointwise-flux-inequality} over $E:= B^\nu_{w,\alpha} \cap [N_0,R]$, we get that
    \begin{align*} \int_E \frac{1}{r}\,\dif r &\leq  C\int_0^{C \log R}  \E\|w(|k|) \hat u_t\|_{\ell^1} \int_E  w^{-1}*m^\nu_t(r)\,\dif r\dif t
      \\&\leq   C\Big(\int_0^{C \log R}\E\|w(|k|) \hat u_t\|_{\ell^1}^p\,\dif t \Big)^{1/p} \Big(\E \int_0^{C \log R}\Big(\int_E  w^{-1}*m^\nu_t(r)\,\dif r\Big)^q\dif t\Big)^{1/q}
      \\&\leq  C\big(C\E \|w(|k|) \hat u_0\|_{\ell^1}^p \log R\big)^{1/p}  \Big(\E \int_0^\infty \int_E  w^{-1}*m^\nu_t(r)\,\dif r\dif t\Big)^{1/q}
      \\&=   C\big(C\E \|w(|k|) \hat u_0\|_{\ell^1}^p \log R\big)^{1/p}   \Big(\int_E  w^{-1}*m^\nu(r)\,\dif r\Big)^{1/q}
      \\&\leq C\big(C\E \|w(|k|) \hat u_0\|_{\ell^1}^p \log R\big)^{1/p}  \alpha^{1/q}  \Big(\int_E  \frac{1}{r}\,\dif r\Big)^{1/q}
    \end{align*}
    where $p^{-1} + q^{-1} = 1$ and we use that $E \subseteq B^\nu_{w,\alpha}$ for the final line. Rearranging and using the definition of $\mu_{N_0,R},$ we conclude.
\end{proof}

In the following arguments, we prove~\eqref{eq:general-general} and~\eqref{eq:compact-general}. From here,~\eqref{eq:general-as} and~\eqref{eq:compact-as} follow by choosing $C$ large enough and sending $p \to \infty$.

We first show~\eqref{eq:compact-general}, which follows from applying Proposition~\ref{prop:B-w-bound} with $w^{-1}(s) := (2L)^{-1} \indc_{s \leq L}$.

\begin{proof}[Proof of Theorem~\ref{thm:compact-fourier-support}]
    We choose
    \[w(s) := 2L\indc_{s \leq L} + \infty \indc_{s > L},\]
    and note that $\int_0^\infty \frac{1}{w(s)}\,\dif s \leq 1/2$ and
    \[\E \|w(|k|)\hat u\|_{\ell^1}^p \leq \E \Big(\sum_{|k| \leq L} L X\Big)^p \leq C^p L^{(d+1)p} \E X^p.\]
    We also have that
    \[w^{-1}* m^\nu(r) = \frac{1}{2L} m^\nu([r-L,r+L]),\]
    where the convolution is understood in the sense of~\eqref{eq:conv-def}.
    Thus
    \[B^\nu_{2L,\alpha} = B^\nu_{w,\alpha} +L.\]
    We conclude by Proposition~\ref{prop:B-w-bound}.
\end{proof}

Finally we prove~\eqref{eq:general-general}, which follows from first applying Proposition~\ref{prop:B-w-bound} with $w_q(s) := (1+ |s|)^q$ and using the lower bound on $w_q^{-1} * m^\nu$ to lower bound $m^\nu([r,r+h])$.

\begin{proof}[Proof of Theorem~\ref{thm:general}]
    Fix $q \geq 2$ and let
    \[w_q(r) := 2(1+r)^{q}.\]
    Then note that $\int_{0}^\infty \frac{1}{w_q(s)}\,\dif s \leq 1/2$ and
    \begin{equation}
      \label{eq:only-use-of-Cq}
      \|w_q(|k|) \hat u_0\|_{\ell^1} \leq  C^q\| |k|^q \hat u_0\|_{\ell^1} \leq C^q \||k|^{q+(d+1)/2} \hat u_0\|_{\ell^2} \leq C^q \|u_0\|_{H^{q+(d+1)/2}} \leq C^q \|u_0\|_{C^{q+(d+1)/2}}.
    \end{equation}
    For $r\geq h,$ for any $\beta>0$,
    \begin{align*}
        \frac{h\beta}{4r} &\leq \int_{r+h/4}^{r+3h/4} \frac{\beta}{s}\,\dif s
        \\&\leq  \int_{r+h/4}^{r+3h/4}w_q^{-1}* m^\nu(s)\,\dif s + \int_{[r+h/4,r+3h/4] \cap B^\nu_{w_q,\beta}}\frac{\beta}{s}\,\dif s
        \\&\leq \int_{r+h/4}^{r+3h/4} \int w_q^{-1}(|s-z|) m^\nu(\dif z)\dif s + \int_{[r,r+h] \cap B^\nu_{w_q,\beta}}\frac{\beta}{s}\,\dif s
        \\&\leq 2 m^\nu([r,r+h]) + \int_{r+h/4}^{r+3h/4}\int_{[1,\infty)\backslash[r,r+h]}  w_q^{-1}(|s-z|) m^\nu(\dif z)\dif s + \int_{[r,r+h] \cap B^\nu_{w_q,\beta}}\frac{\beta}{s}\,\dif s
        \\&=: 2m^\nu([r,r+h]) + R_{q,h,\beta}(r).
    \end{align*}
    Our goal now is to show that $\int_{N_0}^R R_{q,h,\beta}(r)\,\dif r$ is suitably small, which will in turn imply---by essentially Chebyshev's inequality---that $R_{q,h,\beta}(r)$ is typically small, and so that $m^\nu([r,r+h])$ typically has the right lower bound.

    Integrating the first term, we see that
    \begin{align*}
\int_1^R\int_{r+h/4}^{r+3h/4}&\int_{[1,\infty)\backslash[r,r+h]}  w_q^{-1}(|s-z|) m^\nu(\dif z)\dif s\dif r
\\&\leq h\int_1^{R+h}\int_1^\infty  \indc_{|s-z|> h/4}w_q^{-1}(|s-z|) m^\nu(\dif z)\dif s
\\&\leq h \int_1^{2R} \int_1^{3R} \indc_{|s-z|> h/4}w_q^{-1}(|s-z|) m^\nu(\dif z)\dif s + h \int_1^{2R} \int_{3R}^\infty w_q^{-1}(|s-z|) m^\nu(\dif z)\dif s
\\&\leq2h m^\nu([1,3R])\int_{h/4}^\infty s^{-q}\,\dif s + C^q h \int_{3R}^\infty z^{1-q} m^\nu(\dif z)
\\&\leq C^q h^{2-q}m^\nu([1,3R])+ C^q h  \int_{3R}^\infty z^{1-q} \frac{\dif}{\dif z} m^\nu([1,z]) \dif z
\\&\leq C^q h^{2-q} \log(R)+ C^q h \int_{3R}^\infty z^{-q}\log(z) \dif z
\\&\leq C^q h^{2-q} \log(R)+ C^q h R^{2-q}
\\&\leq C^q h^{3-q} \log(R),
    \end{align*}
where we use~\eqref{eq:cumulative-upper-bound} to control $m^\nu([1,a])$ and we use that $R \geq h$. We now use Proposition~\ref{prop:B-w-bound} to integrate the second term of $R_{q,h,\beta}$:
 \begin{align*}
        \int_{N_0}^R\int_{[r,r+h] \cap B^\nu_{w_q,\beta}}\frac{\beta}{s}\,\dif s\dif r &\leq h \int_{B^\nu_{w_q,\beta} \cap [N_0, R+h]} \frac{\beta}{s}\,\dif s
        \\&\leq C  \beta\mu_{[N_0, 2R]}(B^\nu_{w_q,\beta}) \log(R) h
        \\&\leq   C^p  \E \|w_q(|k|) \hat u_0\|_{\ell^1}^p \beta^p \log(R) h
        \\&\leq C^{p+q} \E \|u_0\|_{C^{q+(d+1)/2}}^p \beta^p \log(R)h.
    \end{align*}
    Letting $\alpha := \beta/16$ and combining the displays, we conclude that
    \begin{align*}\mu_{N_0, R}(B^\nu_{h,\alpha}) &\leq \mu_{N_0, R}\{r \in [N_0, R] : R_{q,h, 16\alpha}(r) \geq \frac{h\alpha}{r}\}
    \\&\leq \frac{C}{\log R} \int_{[N_0,R] \cap \{R_{q,h,16\alpha} \geq \frac{h\alpha}{r}\}} \frac{1}{r}\,\dif r
    \\&\leq \frac{C}{\alpha h\log R } \int_{[N_0, R]} R_{q,h,16\alpha}(r)\,\dif r
    \\&\leq C^q h^{2 - q} \alpha^{-1} + C^{p+q} \E \|u_0\|_{C^{q+(d+1)/2}}^p \alpha^{p-1},
    \end{align*}
    as claimed.
\end{proof}

{\small
\bibliographystyle{alpha}
\bibliography{reference-bill,references2}

\begin{thebibliography}{MHSW22}

\bibitem[ABN22]{albritton_enhanced_2022}
Dallas Albritton, Rajendra Beekie, and Matthew Novack.
\newblock Enhanced dissipation and {Hörmander}'s hypoellipticity.
\newblock {\em Journal of Functional Analysis}, 283(3):109522, August 2022.

\bibitem[ACM19]{alberti_exponential_2019}
Giovanni Alberti, Gianluca Crippa, and Anna Mazzucato.
\newblock Exponential self-similar mixing by incompressible flows.
\newblock {\em Journal of the American Mathematical Society}, 32(2):445--490,
  2019.

\bibitem[AK04]{PhysRevLett.93.214504}
Yacine Amarouchene and Hamid Kellay.
\newblock Batchelor scaling in fast-flowing soap films.
\newblock {\em Phys. Rev. Lett.}, 93:214504, Nov 2004.

\bibitem[AV25]{armstrong_anomalous_2025}
Scott Armstrong and Vlad Vicol.
\newblock Anomalous {Diffusion} by {Fractal} {Homogenization}.
\newblock {\em Annals of PDE}, 11(1):2, January 2025.

\bibitem[Bat59]{batchelor_small-scale_1959}
George Batchelor.
\newblock Small-scale variation of convected quantities like temperature in
  turbulent fluid. {Part} 1. {General} discussion and the case of small
  conductivity.
\newblock {\em Journal of Fluid Mechanics}, 5:113--133, January 1959.

\bibitem[BBPS21a]{bedrossian_almost-sure_2021}
Jacob Bedrossian, Alex Blumenthal, and Sam Punshon-Smith.
\newblock Almost-sure enhanced dissipation and uniform-in-diffusivity
  exponential mixing for advection–diffusion by stochastic
  {Navier}–{Stokes}.
\newblock {\em Probability Theory and Related Fields}, 179(3):777--834, April
  2021.

\bibitem[BBPS21b]{bedrossian_batchelor_2021}
Jacob Bedrossian, Alex Blumenthal, and Sam Punshon-Smith.
\newblock The {Batchelor} {Spectrum} of {Passive} {Scalar} {Turbulence} in
  {Stochastic} {Fluid} {Mechanics} at {Fixed} {Reynolds} {Number}.
\newblock {\em Communications on Pure and Applied Mathematics}, 75, September
  2021.

\bibitem[BBPS22a]{bedrossian_lagrangian_2022}
Jacob Bedrossian, Alex Blumenthal, and Sam Punshon-Smith.
\newblock Lagrangian chaos and scalar advection in stochastic fluid mechanics.
\newblock {\em Journal of the European Mathematical Society}, 24(6):1893--1990,
  January 2022.

\bibitem[BBPS22b]{bedrossian_almost-sure_2022}
Jacob Bedrossian, Alex Blumenthal, and Samuel Punshon-Smith.
\newblock Almost-sure exponential mixing of passive scalars by the stochastic
  {Navier}–{Stokes} equations.
\newblock {\em The Annals of Probability}, 50(1):241--303, January 2022.

\bibitem[BCZ17]{bedrossian_enhanced_2017}
Jacob Bedrossian and Michele Coti~Zelati.
\newblock Enhanced {Dissipation}, {Hypoellipticity}, and {Anomalous} {Small}
  {Noise} {Inviscid} {Limits} in {Shear} {Flows}.
\newblock {\em Archive for Rational Mechanics and Analysis}, 224(3):1161--1204,
  June 2017.

\bibitem[BDLIS15]{buckmaster_anomalous_2015}
Tristan Buckmaster, Camillo De~Lellis, Philip Isett, and László Székelyhidi.
\newblock Anomalous dissipation for 1/5-{Hölder} {Euler} flows.
\newblock {\em Annals of Mathematics}, 182(1):127--172, 2015.

\bibitem[BFPS24]{bedrossian_negative_2024}
Jacob Bedrossian, Patrick Flynn, and Sam Punshon-Smith.
\newblock Negative regularity mixing for random volume preserving
  diffeomorphisms, October 2024.
\newblock arXiv:2410.19251 [math].

\bibitem[BH24]{blumenthal_sparsity_2024}
Alex Blumenthal and Manh~Khang Huynh.
\newblock Sparsity of {Fourier} mass of passively advected scalars in the
  {Batchelor} regime, October 2024.
\newblock arXiv:2410.05473 [math].

\bibitem[BSJW23]{burczak_anomalous_2023}
Jan Burczak, László Székelyhidi~Jr., and Bian Wu.
\newblock Anomalous dissipation and {Euler} flows, October 2023.
\newblock arXiv:2310.02934 [math].

\bibitem[BV19]{buckmaster_convex_2019}
Tristan Buckmaster and Vlad Vicol.
\newblock Convex integration and phenomenologies in turbulence.
\newblock {\em EMS Surveys in Mathematical Sciences}, 6(1-2):143--263, 2019.

\bibitem[BZG23]{blumenthal_exponential_2023}
Alex Blumenthal, Michele~Coti Zelati, and Rishabh~S. Gvalani.
\newblock Exponential mixing for random dynamical systems and an example of
  {Pierrehumbert}.
\newblock {\em The Annals of Probability}, 51(4):1559--1601, July 2023.

\bibitem[CCS23]{colombo_anomalous_2023}
Maria Colombo, Gianluca Crippa, and Massimo Sorella.
\newblock Anomalous {Dissipation} and {Lack} of {Selection} in the
  {Obukhov}–{Corrsin} {Theory} of {Scalar} {Turbulence}.
\newblock {\em Annals of PDE}, 9(2):21, November 2023.

\bibitem[CIS24]{cooperman_harris_2024}
William Cooperman, Gautam Iyer, and Seungjae Son.
\newblock A {Harris} theorem for enhanced dissipation, and an example of
  {Pierrehumbert}, March 2024.

\bibitem[CKRZ08]{constantin_diffusion_2008}
Peter Constantin, Alexander Kiselev, Lenya Ryzhik, and Andrej Zlatoš.
\newblock Diffusion and {Mixing} in {Fluid} {Flow}.
\newblock {\em Annals of Mathematics}, 168(2):643--674, 2008.

\bibitem[Cor51]{corrsin_spectrum_1951}
Stanley Corrsin.
\newblock On the {Spectrum} of {Isotropic} {Temperature} {Fluctuations} in an
  {Isotropic} {Turbulence}.
\newblock {\em Journal of Applied Physics}, 22(4):469--473, 1951.

\bibitem[CR24]{cooperman_exponential_2024}
William Cooperman and Keefer Rowan.
\newblock Exponential scalar mixing for the {2D} {Navier}-{Stokes} equations
  with degenerate stochastic forcing, August 2024.
\newblock arXiv:2408.02459 [math].

\bibitem[CWT94]{constantin_onsagers_1994}
Peter Constantin, E.~Weinan, and Edriss~S. Titi.
\newblock Onsager's conjecture on the energy conservation for solutions of
  {Euler}'s equation.
\newblock {\em Communications in Mathematical Physics}, 165(1):207--209,
  October 1994.

\bibitem[CZDG24]{coti_zelati_mixing_2024}
Michele Coti~Zelati, Theodore~D. Drivas, and Rishabh~S. Gvalani.
\newblock Mixing by {Statistically} {Self}-similar {Gaussian} {Random}
  {Fields}.
\newblock {\em Journal of Statistical Physics}, 191:61, May 2024.
\newblock ADS Bibcode: 2024JSP...191...61C.

\bibitem[CZG23]{coti_zelati_enhanced_2023}
Michele Coti~Zelati and Thierry Gallay.
\newblock Enhanced dissipation and {Taylor} dispersion in higher-dimensional
  parallel shear flows.
\newblock {\em Journal of the London Mathematical Society}, 108(4):1358--1392,
  2023.

\bibitem[DC80]{Dillon80}
Thomas~M. Dillon and Douglas~R. Caldwell.
\newblock The batchelor spectrum and dissipation in the upper ocean.
\newblock {\em Journal of Geophysical Research: Oceans}, 85(C4):1910--1916,
  1980.

\bibitem[DEIJ22]{drivas_anomalous_2022}
Theodore~D. Drivas, Tarek~M. Elgindi, Gautam Iyer, and In-Jee Jeong.
\newblock Anomalous {Dissipation} in {Passive} {Scalar} {Transport}.
\newblock {\em Archive for Rational Mechanics and Analysis}, 243(3):1151--1180,
  March 2022.

\bibitem[DLS12]{de_lellis_h-principle_2012}
Camillo De~Lellis and László Székelyhidi.
\newblock The h-principle and equations of fluid dynamics.
\newblock {\em Bulletin of the American Mathematical Society}, 49(3):347--375,
  July 2012.

\bibitem[DLS13]{de_lellis_dissipative_2013}
Camillo De~Lellis and László Székelyhidi.
\newblock Dissipative continuous {Euler} flows.
\newblock {\em Inventiones mathematicae}, 193(2):377--407, August 2013.

\bibitem[DLS22]{de_lellis_weak_2022}
Camillo De~Lellis and László Székelyhidi.
\newblock Weak stability and closure in turbulence.
\newblock {\em Philosophical Transactions. Series A, Mathematical, Physical,
  and Engineering Sciences}, 380(2218):20210091, March 2022.

\bibitem[EL24]{elgindi_norm_2024}
Tarek~M. Elgindi and Kyle Liss.
\newblock Norm {Growth}, {Non}-uniqueness, and {Anomalous} {Dissipation} in
  {Passive} {Scalars}.
\newblock {\em Archive for Rational Mechanics and Analysis}, 248(6):120,
  November 2024.

\bibitem[ELM23]{elgindi_optimal_2023}
Tarek~M. Elgindi, Kyle Liss, and Jonathan~C. Mattingly.
\newblock Optimal enhanced dissipation and mixing for a time-periodic,
  {Lipschitz} velocity field on {T}{\textasciicircum}2, April 2023.
\newblock arXiv:2304.05374 [math].

\bibitem[FI19]{feng_dissipation_2019}
Yuanyuan Feng and Gautam Iyer.
\newblock Dissipation enhancement by mixing.
\newblock {\em Nonlinearity}, 32(5):1810, April 2019.

\bibitem[Fri95]{frisch_turbulence_1995}
Uriel Frisch.
\newblock Turbulence: {The} {Legacy} of {A}. {N}. {Kolmogorov}, November 1995.

\bibitem[GHVM68]{GrantH.L.1968Tsot}
H.~L. Grant, B.~A. Hughes, W.~M. Vogel, and A.~Moilliet.
\newblock The spectrum of temperature fluctuations in turbulent flow.
\newblock {\em Journal of fluid mechanics}, 34(3):423--442, 1968.

\bibitem[GS63]{GibsonC.H.1963Tues}
Carl~H. Gibson and William~H. Schwarz.
\newblock The universal equilibrium spectra of turbulent velocity and scalar
  fields.
\newblock {\em Journal of fluid mechanics}, 16(3):365--384, 1963.

\bibitem[HCR25]{hess-childs_universal_2025}
Elias Hess-Childs and Keefer Rowan.
\newblock A universal total anomalous dissipator, January 2025.
\newblock arXiv:2501.18526 [math].

\bibitem[Ise18]{isett_proof_2018}
Philip Isett.
\newblock A proof of {Onsager}'s conjecture.
\newblock {\em Annals of Mathematics}, 188(3):871--963, November 2018.

\bibitem[JCT00]{PhysRevLett.85.3636}
Marie-Caroline Jullien, Patrizia Castiglione, and Patrick Tabeling.
\newblock Experimental observation of batchelor dispersion of passive tracers.
\newblock {\em Phys. Rev. Lett.}, 85:3636--3639, Oct 2000.

\bibitem[JS24]{johansson_anomalous_2024}
Carl Johan~Peter Johansson and Massimo Sorella.
\newblock Anomalous dissipation via spontaneous stochasticity with a
  two-dimensional autonomous velocity field, September 2024.
\newblock arXiv:2409.03599 [math].

\bibitem[Kol41a]{kolmogorov_local_1941}
Andrej~Nikolaevich Kolmogorov.
\newblock The {Local} {Structure} of {Turbulence} in {Incompressible} {Viscous}
  {Fluid} for {Very} {Large} {Reynolds}' {Numbers}.
\newblock {\em Akademiia Nauk SSSR Doklady}, 30:301--305, January 1941.

\bibitem[Kol41b]{kolmogorov_degeneration_1941}
Andrej~Nikolaevich Kolmogorov.
\newblock On the degeneration of isotropic turbulence in an incompressible
  viscous fluid.
\newblock {\em Dokl. Akad. Nauk SSSR}, 31(6):319--323, 1941.

\bibitem[Kol41c]{kolmogorov_dissipation_1941}
Andrey~Nikolaevich Kolmogorov.
\newblock Dissipation of {Energy} in {Locally} {Isotropic} {Turbulence}.
\newblock {\em Akademiia Nauk SSSR Doklady}, 32:16, April 1941.

\bibitem[MHSW22]{myers_hill_exponential_2022}
Joe Myers~Hill, Rob Sturman, and Mark C.~T. Wilson.
\newblock Exponential mixing by orthogonal non-monotonic shears.
\newblock {\em Physica D: Nonlinear Phenomena}, 434:133224, June 2022.

\bibitem[NB67]{Nye_Brodkey_1967}
James~O. Nye and Robert~S. Brodkey.
\newblock The scalar spectrum in the viscous-convective subrange.
\newblock {\em Journal of Fluid Mechanics}, 29(1):151–163, 1967.

\bibitem[NFS25]{navarro-fernandez_exponential_2025}
Víctor Navarro-Fernández and Christian Seis.
\newblock Exponential mixing by random cellular flows, February 2025.
\newblock arXiv:2502.17273 [math].

\bibitem[NV23]{novack_intermittent_2023}
Matthew Novack and Vlad Vicol.
\newblock An intermittent {Onsager} theorem.
\newblock {\em Inventiones mathematicae}, 233(1):223--323, July 2023.

\bibitem[Obu49]{obukhov_structure_1949}
Alexander~M. Obukhov.
\newblock Structure of {Temperature} {Field} in {Turbulent} {Flow}.
\newblock {\em Izv. Akad. Nauk. SSSR, Ser. Geogr. i Geofiz.}, 13:58--69, 1949.

\bibitem[Pie94]{pierrehumbert_tracer_1994}
Raymond~T Pierrehumbert.
\newblock Tracer microstructure in the large-eddy dominated regime.
\newblock {\em Chaos, Solitons \& Fractals}, 4(6):1091--1110, June 1994.

\bibitem[Row24]{rowan_anomalous_2024}
Keefer Rowan.
\newblock On {Anomalous} {Diffusion} in the {Kraichnan} {Model} and
  {Correlated}-in-{Time} {Variants}.
\newblock {\em Archive for Rational Mechanics and Analysis}, 248(5):93,
  September 2024.

\bibitem[Vil09]{villani_hypocoercivity_2009}
Cedric Villani.
\newblock {\em Hypocoercivity}.
\newblock American Mathematical Society, October 2009.

\bibitem[Vil24]{villringer_enhanced_2024}
David Villringer.
\newblock Enhanced {Dissipation} via the {Malliavin} {Calculus}, May 2024.
\newblock arXiv:2405.12787 [math].

\bibitem[ZDE20]{zelati_relation_2020}
Michele~Coti Zelati, Matias~G. Delgadino, and Tarek~M. Elgindi.
\newblock On the {Relation} between {Enhanced} {Dissipation} {Timescales} and
  {Mixing} {Rates}.
\newblock {\em Communications on Pure and Applied Mathematics},
  73(6):1205--1244, 2020.

\end{thebibliography}
}

\end{document}